\documentclass[a4paper,reqno,12pt]{amsart}

\usepackage{fullpage}

\usepackage{mathtools}

\usepackage{dsfont}

\usepackage[plainpages=false,colorlinks,linkcolor=black,citecolor=black,urlcolor=black,breaklinks]{hyperref}

\usepackage{amsmath,amsthm,amssymb}
\usepackage{mathrsfs}
\usepackage[shortlabels]{enumitem}
\usepackage{graphicx}
\usepackage[font=small]{caption}
\usepackage{xcolor}
\usepackage{url}

\newcommand{\N}{\mathbb{N}}
\newcommand{\R}{{\mathbb{R}}}

\newcommand{\C}{{\mathbb{C}}}
\newcommand{\Z}{{\mathbb{Z}}}

\newcommand{\dd}{{{\rm d}}}
\newcommand{\ii}{{\rm i}}

\newcommand{\ov}{\overline}

\newcommand\wh{\widehat}

\newcommand{\eps}{\varepsilon}

\newcommand{\Dom}{{\operatorname{Dom}}}

\newcommand{\Ran}{{\operatorname{Ran}}}

\newcommand{\Rank}{{\operatorname{rank}}}
\renewcommand{\Re}{\operatorname{Re}}
\renewcommand{\Im}{\operatorname{Im}}

\newcommand{\supp}{\operatorname{supp}}

\newcommand{\BigO}{\mathcal{O}}

\newcommand{\Rd}{\mathbb{R}^d}

\theoremstyle{plain}

\newtheorem{theorem}{Theorem}[section]
\newtheorem{lemma}[theorem]{Lemma}

\newtheorem{proposition}[theorem]{Proposition}

\theoremstyle{definition}
\newtheorem{example}[theorem]{Example}
\newtheorem{remark}[theorem]{Remark}
\newtheorem{asm-sec}[theorem]{Assumption}

\newcommand\cB{\mathcal B}

\newcommand\cD{\mathcal D}

\newcommand\cH{\mathcal H}

\newcommand\cR{\mathcal R}
\newcommand\cS{\mathcal S}

\numberwithin{equation}{section}
\numberwithin{figure}{section}

\usepackage[
backend=biber
,style=alphabetic
,firstinits=true
,maxnames=4 
,sorting=nyvt
,isbn=false
]{biblatex}

\renewbibmacro{in:}{} 

\DeclareFieldFormat[article,inbook,incollection,inproceedings,patent,thesis,unpublished,misc]{title}{#1}
\DeclareFieldFormat{pages}{#1}

\AtEveryBibitem{
	\ifentrytype{article}{
		\clearfield{number}
	}{}
}

\begin{document}

\author{Boris Mityagin}

\address[Boris Mityagin]{Department of Mathematics, The Ohio State University, 231 West 18th Ave,
	Columbus, OH 43210, USA}
\email{mityagin.1@osu.edu, boris.mityagin@gmail.com}

\author{Petr Siegl}

\address[Petr Siegl]{Institute of Applied Mathematics, Graz University of Technology, Steyrergasse 30, 8010 Graz, Austria}
\email{siegl@tugraz.at}


\subjclass{47A10, 47A55, 47A70, 47A75, 35P10, 35P15, 35J10, 81Q12}

\keywords{Riesz property of spectral projections, multi-dimensional harmonic oscillator, Landau Hamiltonian, Laplace-Beltrami operator on a sphere}

\date{\today}

\title{Riesz property in the case of multiple eigenvalues}

\begin{abstract}
We analyze spectra and the Riesz property of spectral projections of non-symmetric perturbations of self-adjoint operators with eigenvalues having arbitrary multiplicities, including infinite ones. In particular, we establish the Riesz property for perturbations of the multi-dimensional harmonic oscillator, Landau Hamiltonian and Laplace-Beltrami operator on a sphere by complex-valued $L^r$-potentials if $d/2 < r < \infty$.
\end{abstract}

\maketitle

\section{Introduction}

Let $A$ be a self-adjoint operator in a separable Hilbert space $\cH$ and assume that the spectrum of $A$ satisfies 
\begin{equation}\label{A.sp.intro}
\sigma(A) = \{ \mu_k \equiv \mu_k(A)\}_{k \in \N}, \quad 0< \mu_1 < \mu_2 < \dots, \quad \mu_k \nearrow + \infty.	
\end{equation}
We denote by $P_k^0 \equiv P_k^0(A) $ the spectral projections on the invariant subspaces associated with $\mu_k$, namely,
\begin{equation}
P_k^0 = \frac{1}{2\pi \ii} \int_{|z-\mu_k|=r_k}(z-A)^{-1} \dd z, \quad k \in \N,
\end{equation}
where
\begin{equation}\label{rk.def}
	r_1  := \frac{1}{2} (\mu_{2}-\mu_1), \quad  r_k := \frac 12 \min\{\mu_k-\mu_{k-1}, \mu_{k+1}-\mu_k\}, \quad k \geq 2. 
\end{equation}

We analyze spectra and most importantly the Riesz projections of the perturbed operator (defined as a form sum)
\begin{equation}
	T = A + V
\end{equation}
where the perturbations $V$ are non-symmetric and unbounded in general. Our goal is to identify conditions on $V$ such that a  system of the Riesz projections of $T$ is complete and has a Riesz property, i.e.~it is equivalent to a complete system of orthonormal projections via a similarity transform.

Our main motivation is perturbations by $L^r$-complex potentials $V$ of multi-dimensional differential operators. In particular, for $d>1$, we consider 
\begin{enumerate}[\upshape (i), wide]
\item 
the $d$-dimensional harmonic oscillator in $L^2(\Rd)$
\begin{equation}\label{HO.intro}
	A_{\rm HO} = -\Delta + |x|^2,
\end{equation}
having the spectrum 
\begin{equation}\label{HO.sp.intro}
\sigma(A_{\rm HO}) = \{2 k -2 + d \}_{k \in \N}, \quad \Rank(P_k^0) = \binom{k + d-3}{k-1}, \quad k \in \N;	
\end{equation}
\item the Landau Hamiltonian (twisted Laplacian) in $L^2(\Rd)$ with $d \in 2 \N$,
\begin{equation}\label{Lan.intro}
	A_{\rm Lan} = \sum_{j=1}^\frac d2 
	\left(
	-\ii \partial_{x_j} + \frac{y_j}{2}
	\right)^2 + 
	\left(
	-\ii \partial_{y_j} + \frac{x_j}{2}
	\right)^2,
\end{equation}
where 
\begin{equation}\label{Lan.sp.intro}
\sigma(A_{\rm Lan}) = \{2 k -2 + d/2 \}_{k \in \N}, \quad \Rank(P_k^0) = \infty, \quad k \in \N;
\end{equation}
\item
 the Laplace-Beltrami operator on $\mathbb S^d$ in $L^2(\mathbb S^d)$ 
\begin{equation}\label{Sd.intro}
A_{\mathbb S^d} =  - \Delta_{\mathbb S^d}
\end{equation}
where
\begin{equation}
\sigma(A_{\mathbb S^d}) = \{ (k-1)(k-2+d) \}_{k \in \N}, 
\quad \Rank(P_k^0) = 
\frac{(2k-3+d)(k-3+d)!}{(k-1)!(d-1)!}, \quad k \in \N.
\end{equation}
\end{enumerate}
For more details and references see Section~\ref{sec:diff.op}. Notice that (for $d>1$) the operators \eqref{HO.intro}, \eqref{Lan.intro} and \eqref{Sd.intro} have eigenvalues with unbounded multiplicities as $k \to \infty$, in the case of $A_{\rm Lan}$ even infinite ones.

In all three cases \eqref{HO.intro}, \eqref{Lan.intro} and \eqref{Sd.intro}, we consider perturbations by complex-valued potentials $V \in L^r(\Rd)$ for $r$ satisfying 
\begin{equation}\label{r.intro}
	\frac d2 < r < \infty. 
\end{equation}
We first show that the perturbed operator $T$ has spectrum localized in a union of a  box around $0$ and non-intersecting disks around sufficiently large unperturbed eigenvalues $\mu_k$. More precisely, 
\begin{equation}\label{Sp.loc.intro}
\sigma(T) \subset \Pi_0\cup \bigcup_{k > N_0} \Pi_k,
\end{equation}
where 
\begin{equation}\label{Pi.def}
	\begin{aligned}
		\Pi_0 &\equiv \Pi_0(N,h_1,h_2):= \left\{z \in \C \, : \,  -h_1 <\Re z \leq \mu_N + r_N, \ |\Im z| \leq h_2 \right\}, 
		\\ \Pi_k&:= B_{r_k}(\mu_k), \quad k \in \N, \quad \Pi:=\bigcup_{k \in \N} \Pi_k.
	\end{aligned}
\end{equation}
with sufficiently large $h_1, h_2, N_0>0$. 

In fact, a more precise eventual localization in the discs with radii depending on $d$ and $r$ is possible, see Section~\ref{sec:diff.op} and Remark~\ref{rem:EV.loc} for details; in particular, this matches the localization result \cite[Thm.~2.1]{Cuenin-2017-88} for $A_{\rm Lan}+V$ if \eqref{r.intro} holds. We note that \cite[Thm.~2.1]{Cuenin-2017-88} contains a localization of eigenvalues also in the critical case $r=d/2$ for $d>2$ and establishes its sharpness, cf.~also previous results on spectra in \cite{Cuenin-2017-42}.

Based on the spectral localization \eqref{Sp.loc.intro}, we define the disjoint Riesz projections of $T$
\begin{equation}\label{Pn.SN.intro}
	\begin{aligned}
		S_0:=\frac{1}{2\pi \ii} \int_{\partial \Pi_0}(z-T)^{-1} \dd z, \quad 
		P_k  := \frac{1}{2\pi \ii} \int_{ \partial \Pi_k }(z-T)^{-1} \dd z, \quad k> N_0,
	\end{aligned}
\end{equation}
and show that
\begin{equation}\label{rank.SN.intro}
\Rank  (S_0)  = \Rank \left(\sum_{k=1}^{N_0} P_k^0 \right), \quad	\Rank  (P_k)  = \Rank (P_k^0), \quad  k>N_0.
\end{equation}
As a main result, we obtain that the system $\{S_0\} \cup \{P_k\}_{k>N_0}$ has a Riesz property, namely, there exists a boundedly invertible $W \in \cB(\cH)$ such that  
\begin{equation}\label{S0Pk.Riesz.intro}
	S_0 = W^{-1} \left(\sum_{k=1}^{N_0} P_k^0 \right) W, \qquad P_k = W^{-1} P_k^0 W, \quad k > N_0,
\end{equation}
cf.~Theorems~\ref{thm:HO}, \ref{thm:Landau} and \ref{thm:LB}. We remark that one obtains the Riesz property also for bounded perturbations $V \in L^\infty(\Rd)$ of $A_{\mathbb S^d}$ and, by \cite[Thm.~1]{Motovilov-2017-8}, for bounded perturbations $V \in L^\infty(\Rd)$ with $\|V\|_{L^\infty}<1$ of $A_{\rm HO}$ and $A_{\rm Lan}$. On the other hand, it remains open whether the Riesz property is absent for perturbations $V \in L^r(\Rd)$ with $r< d/2$; $r = d/2$ and a sufficiently large $\|V\|_{L^r}$; or $r = \infty$ and $\|V\|_{L^\infty} \geq 1$.

As an additional result for $A_{\mathbb S^d}$, in Theorem~\ref{thm:LB.delta} we analyze perturbations of $A_{\mathbb S^d}$ by $\delta$-potentials supported on a $(d-1)$-dimensional submanifold and arrive at similar conclusions as for perturbations by $V \in L^r(\Rd)$.

For comparison, we recall below the known results on one-dimensional harmonic oscillator $-\partial_x^2+x^2$ in $L^2(\R)$ and the Laplace-Beltrami operator on the circle, which corresponds to the Hill operator 
\begin{equation}
	A_{\rm Hill} = - \partial_x^2, \quad \Dom(A_{\rm Hill}) = \{ f \in H^2(-\pi,\pi) \, : \, f(\pi) = f(-\pi), f'(\pi) = f'(-\pi)\}.
\end{equation}
(For a broader collection of results on the Riesz property for one-dimensional differential operators (Dirac, Hill, Schr\"odinger, damped wave) see~\cite{Cox-1994-19,Djakov-2011-351,Djakov-2010-283,Djakov-2012-263,Gesztesy-2012-253,Lunyov-2016-441,Shkalikov-2016-71} and references therein.)
Namely, for $T = A_{\rm HO} + V$ for $d=1$ it is known that the system $\{S_0\} \cup \{P_k\}_{k>N_0}$ has a Riesz property if $V = V_1 + V_2 + V_3 + V_4$ where 
\begin{equation}
\begin{aligned}
& V_1 \in L^r(\R), \quad r \in [1,\infty),
\\
& V_2 \in H^{-s}(\R), \quad s \in (0,1/4),
\\
& V_3 \in  H^{-t}(\R), \quad t \in (0,1/2) \text{ and $\supp(V_3)$ is compact},  
\\
& V_4 = \sum_{k \in \Z} \nu_k \delta(x-x_k), \ \text{with } \{\nu_k\}_{k \in \Z} \in \ell^1(\Z) \ \text{and} \ \{x_k\}_{k \in \Z} \subset \R;
\end{aligned}
\end{equation}
details can be found in \cite{Adduci-2012-10,Adduci-2012-73,Mityagin-2016-106,Mityagin-2019-139}. For $T = A_{\rm Hill} + V$ the Riesz property of the system $\{S_0\} \cup \{P_k\}_{k>N_0}$ holds if $V \in H^{-1}(-\pi,\pi)$, see \cite{Djakov-2009-481}; notice that $\Rank(P_k) =2$ for $k >N_0$.

Our method relies on an abstract test of the Riesz property for locally-form subordinated perturbations of a self-adjoint operator $A$ for which \eqref{A.sp.intro} holds. Namely, if the form  $v : \Dom(A^\frac 12) \times \Dom(A^\frac 12) \to \C$ satisfies
\begin{equation}\label{asm:v.loc.sub}
	|v(P_j^0 f, P_k^0 g)| \leq \omega_j \omega_k \|P_j^0 f\| \|P_k^0 g\|, \quad j,k \in \N, \ f,g \in \Dom(A^\frac12),
\end{equation}
and for the sequence $\{\omega_j\}_{j \in \N}$ the condition
\begin{equation}\label{asm:omega}
	\sum_{j=1}^\infty \frac{\omega_j^2}{ \mu_j} < \infty, \quad \sum_{\substack{j=1 \\j \neq n}}^\infty \frac{\omega_j^2}{|\mu_n-\mu_j|} + \frac{\omega_n^2}{r_n} = o(1), \quad n \to \infty,
\end{equation}
holds, then the spectrum of $T= A+V$, defined as a form sum, is localized as in \eqref{Sp.loc.intro}, see Proposition~\ref{prop:loc}, and the system $\{S_0\} \cup \{P_k\}_{k>N_0}$ has a Riesz property, see Theorem~\ref{thm:RB}. 

Perturbation results based on local-subordination were developed in \cite{Adduci-2012-10,Shkalikov-2010-269,Adduci-2012-73,Mityagin-2016-106,Shkalikov-2016-71,Mityagin-2019-139,Mityagin-2025-31}. Comparing to the most recent  \cite[Thm.~3.4]{Mityagin-2025-31}, the novelty of Theorem~\ref{thm:RB} lies in arbitrary multiplicities of the eigenvalues $\mu_k$ and, more importantly, in removing the condition $A^{-1}$ being in a Schatten class $\cS_p$ for some $p>0$. The latter was used in \cite{Mityagin-2025-31} to establish the completeness of the Riesz projections of $T$. Here we give a new proof of the completeness, inspired by the strategy in \cite[Thm.~1]{Motovilov-2017-8}. These extensions make Theorem~\ref{thm:RB} applicable in particular for perturbations of Landau Hamiltonian, where the multiplicities of $\mu_k$ are infinite. 

The condition \eqref{asm:omega} can be checked easily if the eigenvalue gaps and sequence $\{\omega_k\}_{k \in \N}$, have power-estimates, which often appears for differential operators. Examples of (abstract) operators and perturbations with $\{\omega_k\}_{k \in \N}$ without power-estimates and still satisfying \eqref{asm:omega} can be found in \cite{Mityagin-2025-31}.

\begin{example}\label{ex:mu.om.power}
	Let for some constants $C>0$, $\gamma>0$ and $\alpha \in \R$
	\begin{equation}\label{omega.reg.1}
		\mu_{k+1} - \mu_k \geq C  k^{\gamma-1} \quad \text{and} \quad \omega_k = \BigO(k^{-\alpha}), \quad k \to \infty.	
	\end{equation}
	Then the conditions \eqref{asm:omega} are satisfied if
	\begin{equation}\label{omega.reg.2}
		2\alpha+\gamma>1.	
	\end{equation}
	Indeed, this follows by
	\begin{equation}\label{om.ex.power}
		\sum_{\substack{j=1 \\j \neq n}}^\infty \frac{j^{-2 \alpha}}{|\mu_n-\mu_j|} 
		= 
		\begin{cases}
			\BigO(n^{-(2\alpha+\gamma-1)} \log n),  &\alpha \leq 1/2,
			\\[1mm]
			\BigO(n^{-\gamma}), & \alpha >1/2,
		\end{cases}
		\qquad n \to \infty,
	\end{equation}  
	see \cite[Lemma~4.1, 4.3]{Mityagin-2019-139}, and
	\begin{equation}\label{om.ex.power.diag}
		\frac{\omega_n^2}{r_n}  = \BigO(n^{2\alpha + \gamma -1}), \qquad n \to +\infty.
	\end{equation}  
\end{example}

For perturbations by potentials $V \in L^r$, $r \geq 1$ , the H\"older inequality yields
\begin{equation}\label{V.Hold}
	| \langle V f_j, g_k \rangle| \leq \|V\|_{L^r} \|f_j\|_{L^{p}}\|g_k\|_{L^{p}},
\end{equation}
where
\begin{equation}\label{V.hold.rp}
\frac 1r + \frac 2p =1.
\end{equation}
Hence the sequence $\{\omega_j\}_{j \in \N}$ in \eqref{asm:v.loc.sub} can be chosen as 
\begin{equation}\label{Pj.ue}
	\|P_j^0\|_{L^2 \to L^p}.
\end{equation}
The upper estimates on the norms \eqref{Pj.ue} for \eqref{HO.intro}, \eqref{Lan.intro} and \eqref{Sd.intro} were studied in \cite{Koch-2005-128,Koch-2007-8,Wang-2025-378,Stempak-1998-76,Koch-2007-180,Sogge-1988-77} and they can be used to find sufficient conditions on $r \in [1,\infty]$ so that \eqref{omega.reg.2}, and hence \eqref{asm:omega}, holds. A similar strategy is used also for perturbations of \eqref{Sd.intro} by $\delta_\Sigma$-potentials supported on submanifolds of codimension~$1$, cf.~Subsection~\ref{ssec:LB} and \cite{Burq-2007-138}.

\section{Riesz system test}

\subsection{The perturbed operator $T$}

The operator $T$ is defined below as a relatively form-bounded perturbation of a self-adjoint operator $A$ for which the spectrum satisfy \eqref{A.sp.intro}.
The operator $A$ is associated with the non-negative closed form
\begin{equation}\label{a.def}
	a[f]:=a(f,f) = \|A^\frac12 f\|^2, \quad f \in \Dom(a):= \Dom(A^\frac12), 
\end{equation}
see the representation theorems in \cite[Thm.~VI.2.1, VI.2.23]{Kato-1966}.

The next lemma shows that $v$ is a relatively form bounded perturbation of $a$ with bound $0$. The operator $T$ is defined as the unique m-sectorial operator associated with the closed sectorial form 
\begin{equation}\label{t.def}
t := a + v, \qquad \Dom(t) := \Dom(a).
\end{equation}

\begin{lemma}\label{lem:t.def}
Let $A$ be a self-adjoint operator in a separable Hilbert space, let the spectrum of $A$ satisfy \eqref{A.sp.intro} and let $a$ be the associated form in \eqref{a.def}. Suppose that a sesquilinear form $v$ satisfies \eqref{asm:v.loc.sub} and 
\begin{equation}\label{asm:om.l1}
	\sum_{j=1}^\infty \frac{\omega_j^2}{ \mu_j} < \infty.
\end{equation}
Then for any $\eps>0$, there exists $C_\eps>0$ such that for all $f \in \Dom(a)$
\begin{equation}\label{v.eps.a}
	|v[f]| \leq \eps (a[f] + C_\eps\|f\|^2).
\end{equation}
Thus the form $t$ in \eqref{t.def} is closed and sectorial and hence it defines an m-sectorial operator $T$ (via the first representation theorem~\cite[Thm.~VI.2.1]{Kato-1966}). 
\end{lemma}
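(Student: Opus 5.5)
Decompose $f \in \Dom(a)$ along the spectral projections, $f = \sum_j f_j$ with $f_j := P_j^0 f$. The sum converges in the form norm, since $\sum_j (\mu_j+1)\|f_j\|^2 = a[f] + \|f\|^2 < \infty$. Assume first that $v$ is bounded on $\Dom(a)$ with respect to the form norm; this is implicit in $v$ being a form on $\Dom(a)$, and it follows from the estimate below once $v$ is first tested on finite sums. Then sesquilinearity and \eqref{asm:v.loc.sub} give
\begin{equation*}
|v[f]| \le \sum_{j,k} |v(f_j,f_k)| \le \Big(\sum_{j=1}^\infty \omega_j \|f_j\|\Big)^2 .
\end{equation*}
If one prefers to avoid the boundedness question, one proves the bound for finite sums $f\in \Ran(\sum_{j\le M}P_j^0)$ and extends by density, using the continuity that the bound itself provides.

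The key step is to split the sum at an index $N$ chosen depending on $\eps$. For the tail, Cauchy--Schwarz gives
\begin{equation*}
\sum_{j>N} \omega_j\|f_j\| \le \Big(\sum_{j>N}\frac{\omega_j^2}{\mu_j}\Big)^{1/2}\Big(\sum_{j>N}\mu_j\|f_j\|^2\Big)^{1/2} \le \delta_N^{1/2}\, a[f]^{1/2},
\end{equation*}
where $\delta_N := \sum_{j>N}\omega_j^2/\mu_j \to 0$ by \eqref{asm:om.l1}. For the head, $\sum_{j\le N}\omega_j\|f_j\| \le \big(\sum_{j\le N}\omega_j^2\big)^{1/2}\|f\| =: K_N\|f\|$. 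Combining the two parts with $(x+y)^2\le 2x^2+2y^2$ yields
\begin{equation*}
|v[f]| \le 2\delta_N\, a[f] + 2K_N^2 \|f\|^2 .
\end{equation*}
Now choose $N$ so large that $2\delta_N\le\eps$, and set $C_\eps := 2K_N^2/\eps$. This gives \eqref{v.eps.a}.

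The closedness and sectoriality of $t$ then follow from the standard KLMN-type perturbation theorem for forms \cite[Thm.~VI.1.33]{Kato-1966}. Indeed, $a$ is closed and nonnegative, and $v$ is $a$-bounded with relative bound less than $1$. Hence $t=a+v$ with $\Dom(t)=\Dom(a)$ is closed and sectorial, and the first representation theorem produces $T$. The only mildly delicate point is justifying the double-sum expansion of $v[f]$, which requires the continuity of $v$ discussed above. I expect no real obstacle beyond that.
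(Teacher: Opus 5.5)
Your argument is correct and is essentially the one the paper has in mind: expand $f$ along the orthogonal projections $P_k^0$, bound $|v[f]|$ by $\bigl(\sum_j \omega_j\|P_j^0 f\|\bigr)^2$ via \eqref{asm:v.loc.sub}, control the tail through $\sum_{j>N}\omega_j^2/\mu_j\to 0$ and the head by a constant times $\|f\|^2$, and then apply Kato's perturbation theorem for closed sectorial forms. One caveat: your side remark that continuity of $v$ in the form norm ``follows from the estimate once tested on finite sums'' (or can be obtained by density) is circular, because the bound only controls $v$ on finite sums $\sum_{j\le M}P_j^0 g$. A counterexample is $v(f,g)=\ell(f)\overline{\ell(g)}$, where $\ell$ is a discontinuous linear functional vanishing on all such sums: it satisfies \eqref{asm:v.loc.sub} trivially but violates \eqref{v.eps.a}. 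So this continuity has to be taken as an implicit hypothesis, as the paper also does.
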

\begin{proof}[Sketch of the proof]
The proof follows closely the one of \cite[Prop.~3.1]{Mityagin-2025-31}. The needed adjustments amount to expansions using the complete system of orthonormal projections $P_k^0$, $k \in \N$, namely,
\begin{equation}\label{f.Da.exp}
	f = \sum_{k=1}^\infty P_k^0 f, \quad \|f\|^2 = \sum_{k=1}^\infty \|P_k^0 f\|^2, \quad a[f] = \|A^\frac12 f\|^2 =  \sum_{k=1}^\infty \mu_k \| P_k^0 f|^2.
\end{equation}
\end{proof}

To factorize the resolvent of $T$ in a suitable way, one introduces the operators 
\begin{equation}\label{K.def}
	K(z) f := \sum_{k=1}^\infty (z-\mu_k)^{-\frac 12} P_k^0 f, \quad z \in \rho(A), \quad f \in \cH,
\end{equation}
where, for $0\neq w \in \C$ and $s\in \R$, the $s$-power of $w$ is taken as $w^s := |w|^s e^{\ii s \arg w}$ with $-\pi < \arg w \leq \pi$. Notice that
\begin{equation}
	\|K(z)\| = \max_{k \in \N}{ \frac{1}{|z-\mu_k|^\frac12}}, \quad z \in \rho(A), 
\end{equation}
$\Ran(K(z)) = \Dom(A^{1/2})$ and by \eqref{K.def}
\begin{equation}\label{K^2}
	K(z)^2 = (z-A)^{-1}, \quad z \in \rho(A).
\end{equation}
Moreover, let $B(z)$, $z \in \rho(A)$, be a bounded operator uniquely determined by the bounded form 
$v(K(z) \cdot,K(z)^* \cdot)$; i.e.~$B(z)$ satisfies
\begin{equation}\label{B.form.def}
	\langle B(z)f,g \rangle = v(K(z) f,K(z)^* g), \qquad f,g \in \cH. 
\end{equation}
Since for all $f \in \cH$, 
\begin{equation}\label{B.norm.detail}
\begin{aligned}
\|B(z) f\| & = \sup_{0\neq g \in \cH} \frac{|\langle B(z) f,g \rangle |}{\|g\|} 
\leq \sup_{0\neq g \in \cH} \frac{1}{\|g\|} \sum_{j,k \in \N}  \frac{|v(P_j^0 f,P_k^0 g)|}{|z-\mu_j|^\frac12|z-\mu_k|^\frac12} 
\\
& 
\leq \sup_{0\neq g \in \cH} \frac{1}{\|g\|} \sum_{j,k \in \N}  \frac{\omega_j  \|P_j^0 f\|}{|z-\mu_j|^\frac12} \frac{\omega_k \|P_k^0 g\|}{|z-\mu_k|^\frac12} 
\\
&\leq 
\left(
\sum_{j=1}^\infty \frac{\omega_j^2}{|z-\mu_j|}
\right)^\frac12 
\|f\| 
\left(
\sum_{k=1}^\infty \frac{\omega_k^2}{|z-\mu_k|}
\right)^\frac12,
\end{aligned}
\end{equation}
we have (if \eqref{asm:om.l1} holds)
\begin{equation}\label{B.norm}
\|B(z)\| \leq \sum_{j=1}^\infty \frac{\omega_j^2}{|z-\mu_j| } < \infty.
\end{equation}

Finally, for all $z \in \rho(A)$ such that $I+B(z)$ is boundedly invertible, one obtains 
\begin{equation}\label{Tz.res.dec}
	(z-T)^{-1} = K(z)(I-B(z))^{-1}K(z);
\end{equation}
see~\cite[Chap.VI.3.1]{Kato-1966}, \cite[Lemma~1]{Agranovich-1995-28} for details.

\subsection{Localization of spectrum of $T$}

The spectrum of $T$ is eventually localized in non-intersecting disks around the eigenvalues $\mu_k$, see \eqref{Pi.def}.

\begin{proposition}\label{prop:loc}
Let $A$ be a self-adjoint operator in a separable Hilbert space and let the spectrum of $A$ satisfy \eqref{A.sp.intro}. 
Suppose that a sesquilinear form $v$ satisfies \eqref{asm:v.loc.sub} and \eqref{asm:omega} and let $T$ be the m-sectorial operator defined via the form sum of $A$ and $v$ in Lemma~\ref{lem:t.def}. Then there exist $N_0 \in \N$ and $h_1, h_2>0$ such that 
	\begin{equation}\label{T.sp.loc}
		\sigma(T) \subset \Pi_0(N_0,h_1,h_2) \cup \bigcup_{k > N_0} \Pi_k.
	\end{equation}
	With these $N_0$ and $h_1$, $h_2$, the Riesz projections 
	\begin{equation}\label{Pn.SN.def}
		\begin{aligned}
			S_0:=\frac{1}{2\pi \ii} \int_{\partial \Pi_0}(z-T)^{-1} \dd z, \quad 
			P_k  := \frac{1}{2\pi \ii} \int_{\partial \Pi_k}(z-T)^{-1} \dd z, \quad k > N_0,
		\end{aligned}
	\end{equation}
	are well-defined, disjoint and 
	\begin{equation}\label{rank.SN.Pn}
		\Rank  (S_0)  = \Rank \left( \sum_{k=1}^{N_0} P_k^0 \right), 
			\quad 
			\Rank  (P_k)  = \Rank (P_k^0), \quad  k>N_0.
	\end{equation}
\end{proposition}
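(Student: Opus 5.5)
The plan is to use the factorization \eqref{Tz.res.dec}: whenever $z\in\rho(A)$ and $\|B(z)\|<1$, the operator $I-B(z)$ is invertible by a Neumann series, so $z\in\rho(T)$. By \eqref{B.norm} it therefore suffices to show that
\begin{equation*}
\Phi(z):=\sum_{j=1}^\infty \frac{\omega_j^2}{|z-\mu_j|}\leq \frac12
\end{equation*}
for every $z$ outside $\Pi_0(N_0,h_1,h_2)\cup\bigcup_{k>N_0}\Pi_k$. I would split the complement into two regions and bound $\Phi$ on each.

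\emph{Region 1: $z$ on or near the circles $\partial\Pi_n$, $n$ large.} Take $z$ with $\Re z$ between $\mu_n$ and $\mu_{n+1}$, outside $\Pi_n$ and $\Pi_{n+1}$, with $n>N_0$. For $j\neq n$ with $j\neq n+1$, we have $|z-\mu_j|\geq \frac12|\mu_n-\mu_j|$, since $z$ is at distance at least $r_n$ from $\mu_n$ and $r_n\le\frac12$ of the gap. For the two nearest indices, $|z-\mu_n|\geq r_n$ and $|z-\mu_{n+1}|\geq r_{n+1}$. Hence
\begin{equation*}
\Phi(z)\leq 2\sum_{j\neq n}\frac{\omega_j^2}{|\mu_n-\mu_j|}+\frac{\omega_n^2}{r_n}+\frac{\omega_{n+1}^2}{r_{n+1}}=o(1)
\end{equation*}
as $n\to\infty$, by \eqref{asm:omega}. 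Choosing $N_0$ large makes this at most $1/2$ whenever $\Re z\geq \mu_{N_0}+r_{N_0}$.

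\emph{Region 2: the remaining $z$.} These have $\Re z\le -h_1$, or $|\Im z|>h_2$ with $\Re z\leq \mu_{N_0}+r_{N_0}$. Here I split $\Phi(z)$ into $\sum_{j\le M}$ and $\sum_{j>M}$.
\begin{enumerate}[\upshape (a)]
\item The tail: for $\Re z\leq \mu_{N_0}+r_{N_0}$ and $j$ large we have $|z-\mu_j|\geq c\mu_j$, so the tail is at most $C\sum_{j>M}\omega_j^2/\mu_j$. This is small by \eqref{asm:om.l1}, uniformly in $z$.
\item The finite head: $\sum_{j\le M}$ tends to $0$ as $h_1,h_2\to\infty$.
\end{enumerate}
For $|\Im z|$ large but $\Re z$ in the middle range, one also needs the terms with $\mu_j$ near $\Re z$. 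These are controlled since $|z-\mu_j|\geq |\Im z|$, which is fine given the tail estimate. This gives \eqref{T.sp.loc}. The same bound $\|B(z)\|\le 1/2$ holds on $\partial\Pi_0$ and $\partial\Pi_k$ for $k>N_0$, so the projections \eqref{Pn.SN.def} are well defined. They are disjoint, i.e.\ $P_jP_k=0$ and $S_0P_k=0$, by the standard Riesz calculus, since the contours enclose disjoint parts of $\sigma(T)$.

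\emph{Ranks.} I would use a homotopy in the coupling: $T_s=A+sv$ for $s\in[0,1]$. All estimates above scale by $s\le1$, so the same contours lie in $\rho(T_s)$ for all $s$. The projection $P_k(s)$ is then norm-continuous in $s$, because the resolvent difference
\begin{equation*}
K(z)\bigl[(I-sB)^{-1}-(I-s'B)^{-1}\bigr]K(z)
\end{equation*}
is uniformly small on the compact contour. Projections at norm distance less than $1$ have equal rank; this holds also for infinite rank, via the standard similarity $U$ with $UPU^{-1}=Q$ (Kato I.4.6). Hence $\Rank P_k=\Rank P_k(0)=\Rank P_k^0$, and likewise $\Rank S_0=\Rank\sum_{k\le N_0}P_k^0$.

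The main obstacle I expect is the uniform bound in Region 2. It needs care with the unbounded strip $-h_1<\Re z\le\mu_{N_0}+r_{N_0}$, $|\Im z|>h_2$, and with infinite ranks, where a trace or dimension argument is unavailable. The similarity argument handles the latter.
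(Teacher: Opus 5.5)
Your overall route is the paper's. You bound $\|B(z)\|$ by $\Phi(z)=\sum_j\omega_j^2/|z-\mu_j|$ in the three regions corresponding to \eqref{B.est.1}, \eqref{B.est.2n} and \eqref{B.est.3}, invert $I-B(z)$ by a Neumann series, and obtain the ranks from norm-closeness of projections. For the infinite-rank case the paper cites Krein and Gohberg--Krein, Lemma~I.3.1, and your homotopy $A+sv$ is a standard way to produce the needed chain of close projections. Region 2, the contour bounds and the rank argument are fine.

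One inequality in Region 1 is false as written, though it is easily repaired. For $j\ge n+2$ you only have $|z-\mu_j|\ge\mu_j-\Re z\ge\mu_j-\mu_{n+1}$. The bound $\mu_j-\mu_{n+1}\ge\frac12(\mu_j-\mu_n)$ requires $\mu_j-\mu_{n+1}\ge\mu_{n+1}-\mu_n$. This fails when $\Re z$ is near $\mu_{n+1}$ and the next gap is smaller than the previous one, and \eqref{A.sp.intro} imposes no monotonicity of gaps. For example, take $\mu_{n+1}-\mu_n=10$, $\mu_{n+2}-\mu_{n+1}=1$ and $z=\mu_{n+1}+\frac{\ii}{2}\in\partial\Pi_{n+1}$. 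Then $|z-\mu_{n+2}|=\frac{\sqrt5}{2}$, while $\frac12(\mu_{n+2}-\mu_n)=\frac{11}{2}$.

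The fix is to compare with whichever of $\mu_n,\mu_{n+1}$ is nearer to $\Re z$. Suppose $\mu_n\le\Re z\le\frac12(\mu_n+\mu_{n+1})$. Then $\Re z-\mu_j\ge\mu_n-\mu_j$ for $j<n$, $\mu_j-\Re z\ge\frac12(\mu_j-\mu_n)$ for $j>n$, and $|z-\mu_n|\ge r_n$, so $\Phi(z)\le2\sigma_n$ with $\sigma_n$ as in \eqref{sigma.N.def}. On the other half of the interval the symmetric argument gives $\Phi(z)\le2\sigma_{n+1}$. Hence $\Phi(z)\le2\sigma_{N_0}$ on $\{\Re z\ge\mu_{N_0}\}\setminus\Pi$, which is exactly \eqref{B.est.2n}. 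With this replacement the rest of your argument goes through unchanged.
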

\begin{proof}[Sketch of the proof]
Due to the representation of the resolvent of $T$ in \eqref{Tz.res.dec} and the estimate \eqref{B.norm}, the proof of Proposition~\ref{prop:loc} reduces to the one of \cite[Prop.~3.3]{Mityagin-2025-31} where the case $\Rank(P_n^0)=1$, $n>N_0$, is analyzed. It relies on the following bounds on $\|B(z)\|$
\begin{align}
	\sup_{\Re z \leq -h} \|B(z)\| &= o(1), \quad h \to +\infty, \label{B.est.1}
	\\
	\sup_{\substack{z \notin \Pi \\[1mm] \Re z \geq \mu_N-r_N}}\|B(z)\|	& = \BigO(\sigma_N), \quad N \to + \infty, \label{B.est.2n}
\end{align}
where
\begin{equation}\label{sigma.N.def}
	\sigma_N:= \sup_{n \geq N} \left(\sum_{\substack{j=1 \\j \neq n}}^\infty \frac{\omega_j^2}{|\mu_n-\mu_j|} + \frac{\omega_n^2}{r_n} \right), \quad N \in \N,
\end{equation}
($\sigma_N = o(1)$ as $N \to +\infty$ by \eqref{asm:omega}) and, for any $t>0$ fixed,
\begin{equation}\label{B.est.3}
	\sup_{\substack{z \notin \Pi \\[1mm] |\Re z| \leq t, |\Im z| \geq Y} } \|B(z)\|  = o(1), \quad Y \to +\infty;
\end{equation}
for details see \cite[Prop.~3.1, Lemma~4.1]{Mityagin-2025-31} and \eqref{B.norm}.
The stability of $\Rank$ in a possibly infinite case can be justified by \cite{Krein-1948-11}, \cite[Lem.~I.3.1]{Gohberg-1969}.  
\end{proof}

\begin{remark}\label{rem:EV.loc}
The localization of the spectrum in Proposition~\ref{prop:loc} is used later in the analysis of the spectral projections, where it is convenient to work with $\Pi_k$, $k>N_0$. Nonetheless, a slight adjustment in the estimates in \cite[Lemma~4.1]{Mityagin-2025-31} shows that the spectrum of $T$ can be localized more precisely. Namely, for any $\eps >0$, there exist $N_0 \in \N$ and $h_1, h_2>0$ such that
\begin{equation}\label{T.sp.loc.1}
\sigma(T) \subset \Pi_0(N_0,h_1,h_2) \cup \bigcup_{k > N_0} B_{(1+\eps) \omega_k^2}(\mu_k).
\end{equation}
In particular, for the sequences in Example~\ref{ex:mu.om.power} and a sufficiently large $\varrho>0$, the spectrum of $T$ is eventually localized in the disks 
$B_{\varrho k^{-2 \alpha}}(\mu_k)$.
\end{remark}

\subsection{Riesz system test}

To analyze the differential operators in Section~\ref{sec:diff.op}, we extend the abstract Riesz property test \cite[Thm.~3.4]{Mityagin-2025-31} to the case of arbitrary multiplicities of $\mu_k$, $k \in \N$.
\begin{theorem}\label{thm:RB}
Let $A$ be a self-adjoint operator in a separable Hilbert space and let the spectrum of $A$ satisfy \eqref{A.sp.intro}. 
Suppose that a sesquilinear form $v$ satisfies \eqref{asm:v.loc.sub} and \eqref{asm:omega} and let $T$ be the m-sectorial operator defined via the form sum of $A$ and $v$ in Lemma~\ref{lem:t.def}. 
Let $N_0 \in \N$ and the projections $S_0$ and $P_k$, $k>N_0$, be as in Proposition~\ref{prop:loc}.
Then the system $\{S_0\} \cup \{P_k\}_{k>N_0}$ is complete and there exists a boundedly invertible $W \in \cB(\cH)$ such that  
\begin{equation}\label{S0Pk.Riesz}
S_0 = W^{-1} \left(\sum_{k=1}^{N_0} P_k^0 \right) W, \qquad P_k = W^{-1} P_k^0 W, \quad k > N_0.
\end{equation}
\end{theorem}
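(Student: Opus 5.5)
The plan is to use the classical Kato--Gohberg--Krein criterion for Riesz systems of projections. Write $Q_0 := \sum_{k=1}^{N_0}P_k^0$ and $S_0^0 := Q_0$. The criterion reduces the proof to two properties. First, the system $\{S_0\}\cup\{P_k\}_{k>N_0}$ is complete, i.e.\ the span of the ranges is dense; disjointness $P_jP_k=\delta_{jk}P_k$ is already given by Proposition~\ref{prop:loc}. Second, the differences are quadratically close:
\begin{equation*}
\sum_{k>N}\|P_k-P_k^0\|^2<1
\end{equation*}
for some $N\ge N_0$, with the finitely many remaining projections treated as one block. More precisely, I would enlarge $N_0$ to $N$ so that this tail sum is small, and group $S_0$ together with $P_k$, $N_0<k\le N$, into $\widetilde S_0$. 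The ranks of $\widetilde S_0$ and $\sum_{k\le N}P_k^0$ agree by \eqref{rank.SN.Pn}, and both are finite when the multiplicities are finite. With infinite multiplicities the block should instead be handled through $\|\widetilde S_0 - \sum_{k\le N}P_k^0\|$, using a Hilbert-space version of the Kato lemma. The standard construction is
\begin{equation*}
W:=\widetilde S_0^{0}\widetilde S_0+\sum_{k>N}P_k^0P_k,
\end{equation*}
and I would show that $W-I$ is small, or that $W$ is Fredholm and injective. For the infinite-multiplicity block, it is cleaner to take the Kato-type $W$ built from $\widetilde S_0$ and its unperturbed counterpart, which intertwines them when their difference has norm less than $1$; that norm can also be made small by choosing $N$ large.

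\textbf{Key estimate.} Use the factorization \eqref{Tz.res.dec} together with the Neumann series
\begin{equation*}
(I-B)^{-1}=I+B+B^2(I-B)^{-1}
\end{equation*}
on $\partial\Pi_k$, where $\|B(z)\|\le C\sigma_N\to 0$ by \eqref{B.est.2n}. The difference is then
\begin{equation*}
P_k-P_k^0=\frac1{2\pi \ii}\int_{\partial\Pi_k}K(z)B(z)(I-B(z))^{-1}K(z)\,\dd z .
\end{equation*}
Expanding in blocks $P_j^0(\cdot)P_l^0$, the first-order term has block norms bounded by $\omega_j\omega_l$ times explicit residue-type quantities such as $(\mu_k-\mu_j)^{-1}$. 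Following \cite[Thm.~3.4]{Mityagin-2025-31}, I would estimate the Hilbert--Schmidt-type block sums
\begin{equation*}
\sum_{j,l}\|P_j^0(P_k-P_k^0)P_l^0\|^2 .
\end{equation*}
These are genuine norm bounds because the blocks are orthogonal: an operator with block norms $b_{jl}$ has norm at most $\bigl(\sum_{j,l}b_{jl}^2\bigr)^{1/2}$. This gives
\begin{equation*}
\|P_k-P_k^0\|^2\lesssim \frac{\omega_k^2}{r_k}\sum_{j\ne k}\frac{\omega_j^2}{|\mu_k-\mu_j|}+\text{higher order}.
\end{equation*}
Multiplicities never enter, since only the norms of $P_j^0$ blocks are used. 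The hard part is summing over $k$. With only the $o(1)$ hypothesis \eqref{asm:omega}, $\sum_k\|P_k-P_k^0\|^2$ need not be finite. So instead of the Bari/Kato square-summability, I would prove directly that
\begin{equation*}
\Bigl\|\sum_{k>N}(P_k-P_k^0)f\Bigr\|\le \delta_N\|f\|,
\end{equation*}
with $\delta_N\to0$, via a Schur test on the matrix of block norms, exactly as in \cite{Mityagin-2025-31}. This is the step where the quantity $\sigma_N$ of \eqref{sigma.N.def} is used twice.

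\textbf{Completeness.} This is the main obstacle, because the earlier approach used $A^{-1}\in\cS_p$, which fails here (for instance for the Landau Hamiltonian). Following \cite[Thm.~1]{Motovilov-2017-8}, I would avoid trace-class and Schatten arguments. The plan is to show that the series $\widetilde S_0+\sum_{k>N}P_k$ converges strongly to $I$ by the contour deformation
\begin{equation*}
I-\widetilde S_0-\sum_{N<k\le M}P_k=\lim\frac1{2\pi\ii}\int_{\Gamma_M}(z-T)^{-1}\,\dd z,
\end{equation*}
where $\Gamma_M$ is the vertical line $\Re z=\mu_M+r_M$. On $\Gamma_M$ the factorization \eqref{Tz.res.dec} applies with $\|B\|\le C\sigma_M$, so the integral equals the unperturbed one, $\sum_{k>M}P_k^0 f\to 0$, plus an error of size $O(\sigma_M)\|f\|$ after the same Schur-type bound. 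Hence the partial sums converge strongly to $I$, which gives completeness. Combined with the uniform bound on partial sums from the previous step, this also gives that $W$ is invertible, with $W^{-1}=\widetilde S_0\widetilde S_0^{0}+\sum_{k>N}P_kP_k^0$, and the relations \eqref{S0Pk.Riesz} follow.
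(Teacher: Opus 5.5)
There are genuine gaps in both halves of your argument.

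\textbf{Riesz step.} The estimate you finally aim for, $\|\sum_{k>N}(P_k-P_k^0)f\|\le\delta_N\|f\|$, controls only interval partial sums. At best it yields a Schauder decomposition, not the unconditional convergence that the Riesz property requires. What the paper proves in Lemma~\ref{lem:Pn.bdd} is the absolute bound $\sum_{n\ge N_*}|\langle (P_n-P_n^0)f,f\rangle|\le 2\|f\|^2$. By polarization, this bounds $\sum_n \epsilon_n(P_n-P_n^0)$ uniformly over all $|\epsilon_n|\le 1$. Together with completeness and the rank equalities \eqref{rank.SN.Pn}, that is exactly the hypothesis of the Gohberg--Krein/Motovilov criterion yielding \eqref{S0Pk.Riesz}. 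A Schur test on block norms would naturally give such an absolute bound, but you neither state nor use it.

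Your formula for $W^{-1}$ is also false. By disjointness,
$W\bigl(\widetilde S_0\widetilde S_0^0+\sum_{k>N}P_kP_k^0\bigr)=\widetilde S_0^0\widetilde S_0\widetilde S_0^0+\sum_{k>N}P_k^0P_kP_k^0$,
which is not $I$ in general. For instance, $P_k^0P_kP_k^0=\cos^2\theta\,P_k^0$ for two orthogonal rank-one projections at angle $\theta$; this is why Kato's intertwiner needs the factor $(I-(P-Q)^2)^{-1/2}$. Finally, the claimed smallness of $\|\widetilde S_0-\sum_{k\le N}P_k^0\|$ for large $N$ is unproven. It is essentially a tail estimate that presupposes completeness. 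The paper needs none of this, since the criterion only uses equality of (possibly infinite) ranks.

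\textbf{Completeness.} Your identity $I-\widetilde S_0-\sum_{N<k\le M}P_k=\lim\frac1{2\pi\ii}\int_{\Gamma_M}(z-T)^{-1}\,\dd z$ is wrong already for $T=A$. With the orientation keeping the right half-plane on the left, the principal value of the line integral is $\sum_{k>M}P_k^0-\frac12 I$, not $\sum_{k>M}P_k^0$. For non-normal $T$, even the existence of this principal value needs an argument.

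More seriously, the error bound $O(\sigma_M)\|f\|$ does not follow from \eqref{B.est.2n}. On $\Gamma_M$ we have $\|K(z)\|^2\ge |z-\mu_M|^{-1}=(r_M^2+(\Im z)^2)^{-1/2}$, so $\int_{\Gamma_M}\|K\|^2\|B\|\|(I-B)^{-1}\|\,|\dd z|$ diverges logarithmically. Appealing to ``the same Schur-type bound'' would mean redoing all higher-order residue estimates on an unbounded contour, which you do not do.

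The missing idea is the one in Lemma~\ref{lem:complete}:
\begin{enumerate}[(i)]
\item Work on the bounded squares $\cR_n$.
\item Obtain the uniform bound $\|S_n-S_n^0\|\le C$ from Lemma~\ref{lem:Pn.bdd} by polarization.
\item Prove $(S_n-S_n^0)P_{m_0}^0f\to 0$ only on the dense set $\bigcup_{m_0}\Ran P_{m_0}^0$. There $K(z)P_{m_0}^0$ supplies the extra factor $|z-\mu_{m_0}|^{-1/2}\lesssim |z|^{-1/2}$. This makes the crude bounds $\|K(z)\|\le|\Im z|^{-1/2}$, $\|K(z)\|\le|z|^{-1/2}$ and $\|B(z)\|<\eps$ integrate to $O(\eps)$ on each side of $\cR_n$, uniformly in $n$.
\end{enumerate}
Banach--Steinhaus then gives $S_nf\to f$ for every $f$.
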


\subsection{Proof of Theorem~\ref{thm:RB}}
The proof comprises two steps. First: a bound on the sum of the difference of spectral projections $P_n$ and $P_n^0$ is shown in Lemma~\ref{lem:Pn.bdd}. Second: the completeness of the system $\{S_0\} \cup \{P_n\}_{n>N_0}$ is established in Lemma~\ref{lem:complete}.

The first step is an adjustment of the proof of \cite[Thm.~3.4]{Mityagin-2025-31}, where a version of Lemma~\ref{lem:Pn.bdd} for projections with $\Rank(P_n)=1$, $n>N_0$, is established.

\begin{lemma}\label{lem:Pn.bdd}
	Let the assumptions of Theorem~\ref{thm:RB} be satisfied and let $N_0 \in \N$ be as in Proposition~\ref{prop:loc}. Then there exists $N_* \in \N$ with $N_* > N_0$ such that 
	\begin{equation}\label{RB.cond.N*}
		\sum_{n \geq N_*} |\langle (P_n - P_n^0) f, f \rangle|
		\leq 2 \|f\|^2.
	\end{equation}
\end{lemma}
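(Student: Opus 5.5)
We write $P_n - P_n^0$ through the resolvent factorization \eqref{Tz.res.dec} and expand in a Neumann series in $B(z)$. For $n>N_0$ and $z \in \partial \Pi_n$, estimate \eqref{B.est.2n} gives $\|B(z)\| \le C\sigma_N<1/2$ for large $n$. Then
\begin{equation*}
P_n - P_n^0 = \frac{1}{2\pi \ii}\int_{\partial \Pi_n} K(z)\big((I-B(z))^{-1}-I\big)K(z)\,\dd z = \sum_{m\ge1}\frac{1}{2\pi\ii}\int_{\partial\Pi_n} K(z)B(z)^m K(z)\,\dd z .
\end{equation*}
For each $f$ we pair this with $f$ and obtain $\langle K(z)B(z)^m K(z)f,f\rangle = \langle B(z)^m K(z) f, K(z)^*f\rangle$. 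We split $K(z)=K_n(z)+\tilde K_n(z)$, where $K_n(z)=(z-\mu_n)^{-1/2}P_n^0$ is the singular part on $\partial\Pi_n$ and $\tilde K_n(z)$ is the remainder. On $\partial\Pi_n$ we have $|z-\mu_n|=r_n$, and $|z-\mu_j|\ge \tfrac12|\mu_n-\mu_j|$ for $j\ne n$.

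The key step is a bound of the form
\begin{equation*}
|\langle (P_n-P_n^0)f,f\rangle| \le C\,\big(\sigma_{N}\big)\Big(\|P_n^0 f\|^2 + \|P_n^0 f\|\,\Phi_n(f) + \Phi_n(f)^2\Big),\qquad \Phi_n(f)^2:= r_n\sum_{j\ne n}\frac{\|P_j^0 f\|^2}{|\mu_n-\mu_j|^{\,}}\cdot\frac{1}{|\mu_n-\mu_j|}\,\cdot\,\text{(suitable weight)},
\end{equation*}
mimicking \cite[Thm.~3.4]{Mityagin-2025-31}. More concretely, we use the identity that on $\partial\Pi_n$ the functions $z\mapsto (z-\mu_j)^{-1/2}(z-\mu_k)^{-1/2}$ for $j,k\ne n$ are analytic inside $\Pi_n$. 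This kills the $m$-independent "outer–outer" contribution of order zero, so that each term in the expansion carries at least one factor of $B$ with entries $v(P_j^0\cdot,P_k^0\cdot)$. Using \eqref{asm:v.loc.sub}, every matrix element of $B(z)$ between $P_j^0$ and $P_k^0$ is bounded by $\omega_j\omega_k|z-\mu_j|^{-1/2}|z-\mu_k|^{-1/2}$. This reduces everything to scalar sums with exactly the structure of the rank-one case: the multiplicities enter only through $\|P_j^0 f\|$, which play the role of the Fourier coefficients $|\langle f,e_j\rangle|$. Then the contour length $2\pi r_n$ times the sup of the integrand yields terms like $\omega_n^2/r_n\cdot\|P_n^0f\|^2$ and $\sum_{j\ne n}\omega_n\omega_j\|P_n^0 f\|\|P_j^0 f\|\, r_n^{1/2}\ldots$.

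Summing over $n\ge N_*$, the diagonal terms contribute $\sup_{n\ge N_*}(\omega_n^2/r_n)\sum\|P_n^0f\|^2\le \sigma_{N_*}\|f\|^2$. The cross terms are handled by Cauchy–Schwarz together with a Schur-test type estimate. We view $\big(\omega_n\omega_j/|\mu_n-\mu_j|\big)_{n\ne j}$ and the higher-order products ($m\ge2$, whose norms are bounded by $\|B\|^{m-1}\le (C\sigma_{N})^{m-1}$ times a single-factor estimate) as a matrix acting on $(\|P_j^0 f\|)_j\in\ell^2$. Its Hilbert–Schmidt or Schur norm is controlled by $\sup_n\sum_{j\ne n}\omega_j^2/|\mu_n-\mu_j|$, which is $o(1)$ by \eqref{asm:omega}, using symmetric weights $\omega_j/\sqrt{|\mu_n-\mu_j|}$. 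The geometric series in $m$ converges since $C\sigma_{N}<1/2$. Choosing $N_*$ large therefore gives the total bound $\le C'\sigma_{N_*}\|f\|^2\le 2\|f\|^2$.

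The main obstacle is the cross/off-diagonal terms. Their sum over $n$ must be bounded by $\|f\|^2$ rather than by $\sum_n\|f\|^2$: one cannot simply bound $|\langle (P_n-P_n^0)f,f\rangle|$ by $\|P_n-P_n^0\|\|f\|^2$. The plan is to keep the explicit block structure, estimating each $|\langle\cdot\rangle|$ by sums in $\|P_j^0 f\|\|P_k^0 f\|$, and to control the resulting doubly-indexed kernel via Schur's test with $\sigma_N$. This reproduces the argument of \cite[Thm.~3.4]{Mityagin-2025-31} verbatim once $|\langle f,e_j\rangle|$ is replaced by $\|P_j^0f\|$.
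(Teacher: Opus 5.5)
Your outline matches the paper's: a Neumann series for $K(I-B)^{-1}K$ on $\partial\Pi_n$, a block expansion in the $P_j^0$, and the estimates of \cite{Mityagin-2025-31} with $|\langle f,e_j\rangle|$ replaced by $\|P_j^0 f\|$. However, the mechanism you spell out at the decisive step is wrong.

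\textbf{The cancellation is misidentified.} The function $z\mapsto(z-\mu_j)^{-1/2}(z-\mu_k)^{-1/2}$, $j,k\neq n$, is in general \emph{not} holomorphic in $\Pi_n$. If $\mu_j<\mu_n<\mu_k$, the segment $\Pi_n\cap\R$ lies on the cut $(-\infty,\mu_k]$ of the principal branch. So $(z-\mu_k)^{-1/2}$ changes sign across it, while $(z-\mu_j)^{-1/2}$ does not; in particular your remainder $\tilde K_n(z)$ is not holomorphic in $\Pi_n$. The cancellation actually available is the one behind \eqref{B.PVP}--\eqref{KVK.s.exp}. By orthogonality of the $P_j^0$ the half powers pair up, so every term of $\langle KB^{s+1}Kf,f\rangle$ is a constant times $\big((z-\mu_j)(z-\mu_{j_1})\cdots(z-\mu_{j_s})(z-\mu_k)\big)^{-1}$. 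The terms in which \emph{no} index equals $n$ form the part $\tilde R_n(z)(\wh V\tilde R_n(z))^{s+1}$, with $\tilde R_n(z)=\sum_{j\neq n}(z-\mu_j)^{-1}P_j^0$. This part is holomorphic on $\ov{\Pi_n}$ and integrates to zero. What disappears is this whole part, not an ``order zero'' term.

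\textbf{Your bound for the $m\ge 2$ terms does not close without this cancellation.} You bound them by $\|B\|^{m-1}$ times a one-factor estimate, with length times supremum on $\partial\Pi_n$. This fails in two places.
\begin{enumerate}[(a)]
\item For the part with $\tilde K_n$ on both sides it gives $\|B\|^m r_n\|\tilde K_n f\|^2\asymp\|B\|^m r_n\sum_{j\neq n}\|P_j^0 f\|^2/|\mu_n-\mu_j|$. Summing over $n$ then needs $\sup_j\sum_{n\neq j} r_n/|\mu_n-\mu_j|<\infty$, which fails for $A_{\rm HO}$ and $A_{\rm Lan}$, where $r_n=1$ and $|\mu_n-\mu_j|=2|n-j|$.
\item The mixed terms treated the same way give $\sum_n r_n^{1/2}\|P_n^0f\|\big(\sum_{j\neq n}\|P_j^0f\|^2/|\mu_n-\mu_j|\big)^{1/2}$. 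This is not bounded by $C\|f\|^2$: try $\|P_j^0 f\|=M^{-1/2}$ for $M\le j\le 2M$ and let $M\to\infty$.
\end{enumerate}
Your ansatz, with $\Phi_n(f)^2$ weighted by $r_n$ and a global prefactor $\sigma_N$, has exactly this defect. The weight that makes the sum over $n$ converge must be $\omega_n^2$, not $r_n$.

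\textbf{What works.} After discarding the holomorphic part:
\begin{enumerate}[(i)]
\item Each surviving term whose two outer projections differ from $P_n^0$ has $P_n^0$ in an interior slot. On $\partial\Pi_n$ one has $\|P_n^0B(z)\|\,\|B(z)P_n^0\|\le 2\sigma_N\,\omega_n^2/r_n$. Summing over the $m-1$ possible positions of the first such $P_n^0$ gives the bound $(m-1)(2\sigma_N)^{m-1}\,2\omega_n^2\sum_{j\neq n}\|P_j^0f\|^2/|\mu_n-\mu_j|$.
\item In the mixed terms, peel off the $B$ adjacent to each $K$, so that an outer index $j\neq n$ carries $\omega_j/|z-\mu_j|$. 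This leads to $\sum_n\omega_n\|P_n^0f\|\sum_{j\neq n}\omega_j\|P_j^0 f\|/|\mu_n-\mu_j|$.
\end{enumerate}
Both sums over $n\ge N_*$ are bounded by a small multiple of $\|f\|^2$ via Cauchy--Schwarz and a Schur-type bound, using \eqref{asm:omega}. For $j<N_*$ this also needs $\sum_j\omega_j^2/\mu_j<\infty$. This yields the $2^{-s}$ bound \eqref{N*.s.sum}.
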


\begin{proof}[Sketch of the proof]

We employ the representation of the resolvent difference (see \eqref{Tz.res.dec})
\begin{equation}\label{res.exp.s}
	(z-T)^{-1} - (z-A)^{-1} = K(z) \left(\sum_{s =0}^\infty B(z)^{s+1} \right) K(z) 
\end{equation}
if $z \in \rho(A)$ and $\|B(z)\|<1$. We note that $\|B(z)\| \leq 1/2$ for all $z \notin \Pi$ with $\Re z \geq \mu_{N} -  r_{N}$ for a sufficiently large $N$, see  \eqref{B.est.2n}. 

The crucial ingredient is the following bound. Namely, there exists $N_* \in \N$ with $N_* > N_0$ such that for each non-negative integer $s$
\begin{equation}\label{N*.s.sum}
\sum_{n \geq N_*} \left| \frac{1}{2 \pi \ii} \int_{\partial \Pi_n} \langle KB^{s+1} K f,f \rangle \; \dd z \right|
\leq 2^{-s}\|f\|^2, \quad f \in \cH.
\end{equation}

The proof \eqref{N*.s.sum} follows closely the estimates in \cite[Sec.~4.3]{Mityagin-2025-31}. The only needed adjustment is the representation of the quadratic form of $KB^{s+1}K$ adapted to the expansion with respect to the complete orthogonal projections $P_k^0$, $k \in \N$. To this end, we show below that the operator $B$ defined by \eqref{B.form.def} can be expressed also as
\begin{equation}\label{B.PVP}
B = \sum_{j,k \in \N} \frac{1}{(z-\mu_j)^\frac12(z-\mu_k)^\frac12} P_k^0 \wh V P_j^0.
\end{equation}
Here $\wh V$ is the bounded operator associated with the form $v$, more precisely,
\begin{equation}
\wh V : \cD_a:=(\Dom(A^\frac12), \|A^\frac12 \cdot\|) \to \cD_a^*:= (\Dom(A^\frac12), \|A^\frac12 \cdot\|)^*, \ \wh V f = v(f, \cdot); 
\end{equation}
we clearly have $\cD_a \hookrightarrow \cH \equiv \cH^* \hookrightarrow \cD_a^*$ where the embeddings are continuous and have dense ranges.
The projections $P_j^0$ are viewed as maps from $\cH$ to $\cD_a$. On the other hand, $P_k^0$ are viewed as maps from $\cD_a^*$ to $\cH$, i.e.~, for any $f^* \in \cD_a^*$ 
\begin{equation}
(P_k^0 f^*) (g) = f^*(P_k^0 g ), \quad g \in \cD_a.
\end{equation}
Indeed, we have $P_k^0 f^* \in \cH^* \equiv \cH$ as
\begin{equation}
\begin{aligned}
| (P_k^0 f^*) (g)| & \leq \|f^*\|_{\cD_a^*} \|P_k^0g\|_{\cD_a} = \|f^*\|_{\cD_a^*} \|A^\frac12 P_k^0 g \| 
= \mu_k^\frac12 \|f^*\|_{\cD_a^*} \|P_k^0 g\| 
\\
& \leq \mu_k^\frac12 \|f^*\|_{\cD_a^*} \|g\|.
\end{aligned}	
\end{equation}
Since for all $f,g \in \cH$,
\begin{equation}
| \langle P_k^0 \wh V P_j^0 f, g \rangle | = | v(P_j^0 f, P_k^0 g) | \leq \omega_j \omega_k \|P_j^0 f\| \|P_k^0 g\|, \quad j,k \in \N,
\end{equation}
the r.h.s.~of \eqref{B.PVP} defines a bounded operator on $\cH$; the estimates are analogous to \eqref{B.norm.detail}. Finally, it is easy to check that $B$ in \eqref{B.PVP} coincides with $B$ defined by \eqref{B.form.def}. 

A repeated application of \eqref{B.PVP} yields that for all $f \in \cH$ and $s \in \N_0$, 
\begin{equation}\label{KVK.s.exp}
	\begin{aligned}
		\langle KB^{s+1} K f,f \rangle 
		= \sum_{j,j_1, \dots, j_s, k \in \N} \frac{\langle P_k^0 \wh V P_{j_1}^0 P_{j_1}^0 \wh V P_{j_2}^0 \dots P_{j_{s-1}}^0 \wh V P_{j_s}^0  P_{j_{s}}^0 \wh V P_{j}^0  f,  P_k^0 f \rangle}{(z-\mu_j)(z-\mu_{j_1})(z-\mu_{j_2})\dots (z-\mu_{j_s})(z-\mu_k)};  
	\end{aligned}
\end{equation}
cf.~\cite[Lemma~4.3]{Mityagin-2025-31} in the case $\Rank(P_n^0)=1$ for all $n>N_0$. 

In the next step, \eqref{N*.s.sum} is justified by estimating the contour integrals $\int_{\partial \Pi_n} \langle KB^{s+1} K f,f \rangle \, \dd z$; the details are fully analogous to \cite[Lemma~4.4 -- 4.8]{Mityagin-2025-31}.

Returning to \eqref{res.exp.s} and employing \eqref{N*.s.sum}, we obtain
\begin{equation}
\begin{aligned}
\sum_{n \geq N_*} |\langle (P_n - P_n^0) f, f \rangle| & = 
\sum_{n \geq N_*} \left| \frac{1}{2 \pi \ii} \int_{\partial \Pi_n} \Big\langle 
K \Big(\sum_{s =0}^\infty B^{s+1} \Big) K f,f 
\Big \rangle \; \dd z \right|
\\
& \leq 
\sum_{s=0}^\infty 2^{-s} \|f\|^2 = 2 \|f\|^2, \qquad f \in \cH,
\end{aligned}
\end{equation}
which proves \eqref{RB.cond.N*}.
\end{proof}

In the second step, the completeness of the system of projections \eqref{Pn.SN.def} is proved using the strategy in \cite[Proof~of Thm.~1.1]{Motovilov-2017-8}.

\begin{lemma}\label{lem:complete}
Let the assumptions of Theorem~\ref{thm:RB} be satisfied. Then the system of projections \eqref{Pn.SN.def} is complete.
\end{lemma}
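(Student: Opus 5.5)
Completeness here means the following: if $g \in \cH$ satisfies $S_0^* g = 0$ and $P_k^* g = 0$ for all $k>N_0$, then $g=0$. Equivalently, the span of the ranges of $S_0$ and $P_k$ is dense. The plan, following the idea of \cite[Thm.~1]{Motovilov-2017-8}, is to show that the finite partial sums
\[
Q_N := S_0 + \sum_{k=N_0+1}^{N} P_k
\]
converge strongly (at least weakly on a dense set) to $I$ as $N\to\infty$. Once this holds, $\langle f, g\rangle = \lim_N \langle Q_N f, g\rangle = 0$ for every $f$, and hence $g=0$.

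\textbf{Contour representation.} By the Cauchy theorem and the localization of Proposition~\ref{prop:loc}, $Q_N$ is the Riesz projection of $T$ for the part of $\sigma(T)$ in $\Pi_0 \cup \bigcup_{N_0<k\le N}\Pi_k$. We write it as a contour integral over the boundary $\Gamma_N$ of a rectangle with sides $\Re z = -h_1$, $\Re z = \mu_N + r_N$, and $\Im z = \pm Y$, with $Y$ large. The vertical line $\Re z = \mu_N + r_N$ passes through the gap between $\Pi_N$ and $\Pi_{N+1}$. Using the same contour for $A$, the corresponding integral equals $\sum_{k\le N} P_k^0$, which converges strongly to $I$. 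It therefore suffices to show
\[
\frac{1}{2\pi\ii}\int_{\Gamma_N} \big((z-T)^{-1}-(z-A)^{-1}\big)\,\dd z \;\to\; 0
\]
weakly, say on $f,g \in \Dom(A^{1/2})$ or on $\bigcup_M \Ran \sum_{k\le M}P_k^0$, with uniform boundedness on $\cH$ coming from Lemma~\ref{lem:Pn.bdd}.

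\textbf{Estimating the resolvent difference.} By \eqref{res.exp.s}, the difference equals $K(z)B(z)(I-B(z))^{-1}K(z)$. On the bulk of $\Gamma_N$, that is, on the vertical segment at $\Re z=\mu_N+r_N$, \eqref{B.est.2n} gives $\|B\|=\BigO(\sigma_N)\to0$. On the horizontal segments, \eqref{B.est.3} gives smallness as $Y\to\infty$, and $Y$ can be sent to infinity for fixed $N$ because the integrand decays like $|z|^{-1}$ times $o(1)$ in norm. For fixed $f,g$ with finitely many components, the quantity $\langle K(z) B K(z) f, g\rangle$ involves $(z-\mu_j)^{-1/2}$ only for bounded $j$. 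This yields an extra factor $|z|^{-1/2}$, and integrating $|z|^{-1}\cdot\|B\|$ along the vertical line at distance $\sim\mu_N$ gives a bound of order $\sigma_N \int \dd y/(\mu_N+|y|)$.

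\textbf{Main obstacle.} This last integral diverges logarithmically, so the crude norm bound is insufficient. I expect this to be the crux. I would first exploit that $f,g$ are finite combinations: then $\|K(z)f\|\lesssim |z|^{-1/2}$, and the integral of $|z|^{-1/2}\cdot|z|^{-1/2}$ times $\|B\|\le\sigma_N$ still has a logarithm. To remove it, I would expand once more, writing the difference as $K B K + K B^2 (I-B)^{-1} K$. For the first term, the explicit formula \eqref{B.PVP} gives $\langle KBKf,g\rangle = \sum_{j,k\le M} \langle P_k^0\wh V P_j^0 f,g\rangle (z-\mu_j)^{-1}(z-\mu_k)^{-1}$. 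This is $\BigO(|z|^{-2})$, so its integral over $\Gamma_N$ tends to $0$ (it is in fact computable by residues, and the residues at $\mu_j,\mu_k\le\mu_M<\mu_N$ cancel in the limit). For the second term, $\|B(z)K(z)f\|$ carries the decay: $\|B K f\|\le (\sum_j \omega_j^2/|z-\mu_j|)^{1/2}\cdot \max_{j\le M}\omega_j|z-\mu_j|^{-1}\|f\|$, which is integrable along the line with total size $o(1)$ as $N\to\infty$. Together with uniform boundedness of $Q_N$ (from \eqref{RB.cond.N*} plus finitely many terms, after polarization), this gives $Q_N\to I$ weakly, and hence completeness.
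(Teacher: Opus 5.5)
Your strategy works and differs genuinely from the paper's, but as written it leaves out one side of the contour (see the second paragraph).

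\textbf{Comparison with the paper.} The logarithm you meet comes from sending $Y\to\infty$ for fixed $N$. The paper avoids it by tying the height of the contour to $n$. It integrates over the boundary $\cR_n$ of the square $[-\gamma_n,\gamma_n]\times[-\ii\gamma_n,\ii\gamma_n]$, with $\gamma_n=\mu_n+r_n$. By scaling, $\int_{\cR_n}\|K(z)\|\,|z|^{-1/2}\,|\dd z|$ is bounded uniformly in $n$, so your $\sigma_N\int \dd y/(\mu_N+|y|)$ becomes $\BigO(1)$. It then combines \eqref{B.est.1}, \eqref{B.est.2n} and \eqref{B.est.3}, splitting the horizontal sides at $|\Re z|=\gamma_{N_1}$, to get $\|B(z)\|<\eps$ on all of $\cR_n$ for large $n$. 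The crude first-order bound $\|K\|\,\|B\|\,\|(I-B)^{-1}\|\,\|KP_{m_0}^0f\|$ then gives $\|(S_n^0-S_n)P_{m_0}^0 f\|=\BigO(\eps)\|f\|$. That is strong convergence, with only the input restricted.

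You keep an infinitely tall contour and gain decay from a second-order splitting instead. The Born term $\langle KBKf,g\rangle$ is rational with poles at $\mu_j,\mu_k$, $j,k\le M$, so its integral over the closed contour is exactly zero once $N>M$, not merely in the limit. The remainder $\langle B(I-B)^{-1}BKf,K^*g\rangle$ is then treated separately. This forces both $f$ and $g$ into $\Ran\sum_{k\le M}P_k^0$, so you only get weak convergence on a dense set. Together with the uniform bound from Lemma~\ref{lem:Pn.bdd}, obtained exactly as in the paper, that still suffices for completeness. In exchange, the smallness comes from $\gamma_N\to\infty$ and not from $\eps$-smallness of $\|B\|$ on a large contour. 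You also avoid the $N_1,N_2$ bookkeeping on the horizontal sides.

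\textbf{What needs repair.} First, you never estimate the left side $\Re z=-h_1$ of $\Gamma_N$. It does not move with $N$, and your bound for the remainder there is finite but not small. So your estimates, as stated, do not show that the contour integral tends to $0$. There are two ways to fix this.
\begin{itemize}
\item Use Cauchy's theorem. The remainder, tested on $f,g$, equals $\langle((z-T)^{-1}-(z-A)^{-1}-KBK)f,g\rangle$, which is analytic in $\Re z<-h_1$. Enlarge $h_1$ if needed so that $\|B\|\le\frac12$ there. Since $|z-\mu_j|\ge|z|$ for $\Re z\le 0$, the remainder is $\BigO(|z|^{-3/2})$ there, and closing the contour to the left shows that the left line contributes zero.
\item Alternatively, put the left side at $\Re z=-\gamma_N$, as the paper does, and use \eqref{B.est.1}.
\end{itemize}
Second, the bound $\|B(z)K(z)f\|\lesssim|z|^{-1}$ alone is still not integrable along an infinite vertical line. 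Integrability and the decay $\BigO(\gamma_N^{-1/2})$ appear only after pairing it with $\|K(z)^*g\|\lesssim|z|^{-1/2}$, using $|z-\mu_j|\ge|z|/2$ for $j\le M$ once $\gamma_N\ge 2\mu_M$. This gives an integrand of order $|z|^{-3/2}$, and you should say so explicitly.
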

\begin{proof}
Let $\cR_n$ be the boundary of the square $[-\gamma_n, \gamma_n] \times [-\ii \gamma_n, \ii \gamma_n] $ in $\C$ where 
\begin{equation}\label{gamman.def}
	\gamma_n : = \mu_n + r_n,
\end{equation}
and let for $n>N_0$
\begin{equation}\label{In0.In.def}
	\begin{aligned}
		S_n^0 &:= \frac1{2 \pi \ii} \int_{\cR_n} (z-A)^{-1} \, \dd z = \sum_{k=1}^n P_k^0,
		\\
		S_n &:= \frac1{2 \pi \ii} \int_{\cR_n} (z-T)^{-1} \, \dd z = S_0 + \sum_{k=N_0+1}^n P_k.
	\end{aligned}
\end{equation}
Since $A=A^*$, we have for each $f \in \cH$ that
$
	\lim_{n \to \infty} S_n^0 f = f.
$
We establish below that also $\lim_{n \to \infty} S_n f = f$ for each $f \in \cH$, thus \eqref{Pn.SN.def} is complete.

In the first step, we show that there exists $C >0$ such that for all $n  > N_0$
	\begin{equation}\label{In.bdd}
		\|S_n^0 - S_n\| \leq C.
	\end{equation}

Employing \eqref{In0.In.def}, it follows from \eqref{RB.cond.N*}  that for all $n > N_*$
	\begin{equation}
		\begin{aligned}
			|\langle (S_n^0 - S_n)f, f\rangle| 
			& \leq \left\|\sum_{k=1}^{N_0} P_k^0 -S_0 \right \|\|f\|^2 
			+
			\sum_{k = N_0 +1}^{N_*-1} \|P_k - P_k^0\| \|f\|^2
			\\
			& \qquad +
			\sum_{k \geq N_*} |\langle (P_k - P_k^0) f, f \rangle|
			\\
			& \leq 
			\left(
			1 + \|S_0\|+ \sum_{k = N_0 +1}^{N_*-1} \|P_k - P_k^0\| + 2
			\right) 
			\|f\|^2.
		\end{aligned}
	\end{equation}
	Hence \eqref{In.bdd} can be justified by polarization, see e.g.~\cite[Lemma~IV.2.1]{EE}.

	In the second step, we show that $\lim_{n \to \infty} (S_n^0 - S_n) f = 0$ for each $f \in \cH$.
	To this end, due to the uniform boundedness \eqref{In.bdd}, it suffices to verify that for each $m_0 \in \N$ 
	\begin{equation}\label{In.psim0}
		\lim_{n \to \infty} (S_n^0 - S_n) P_{m_0}^0 f = 0
	\end{equation}
	since $\{P_m^0\}_{m \in \N}$ is complete.

	From the representation of the resolvent difference (see \eqref{Tz.res.dec})
	\begin{equation}\label{res.dif}
		(z-T)^{-1} - (z-A)^{-1} =  K(z) B(z) (I-B(z))^{-1}K(z)
	\end{equation}  
	and 
	\begin{equation}
		\|K(z) P_{m_0}^0 f \| = \frac{\|P_{m_0}^0 f\|}{|z-\mu_{m_0}|^\frac 12},
	\end{equation}
	we obtain that
	\begin{equation}\label{int.In}
	\|(S_n^0 - S_n ) P_{m_0}^0 f \| 
		\leq \frac{1}{2\pi} \int_{\cR_n } \|K(z)\| \|B(z)\| \|(I-B(z))^{-1}\| \frac{\|P_{m_0}^0 f\|}{|z-\mu_{m_0}|^\frac 12} \, \dd z.
	\end{equation}
	Thus there exists $M_0 \equiv M_0(m_0)\geq N_*$ such that for all $n > M_0$
	\begin{equation}\label{int.In.2}
		\|(S_n^0 - S_n )P_{m_0}^0 f \| 
		\leq \frac{\|f\|}{\pi} \int_{\cR_n } \|K(z)\| \|B(z)\| \|(I-B(z))^{-1}\| \frac{1}{|z|^\frac 12} \, \dd z.
	\end{equation}
	
	Let $\eps \in (0,\frac 12)$. From \eqref{B.est.1} and \eqref{B.est.2n}, we obtain that there exists $N_1 \geq M_0$ such that 
	\begin{equation}\label{B.eps.1}
		\sup_{\Re z \leq -\gamma_{N_1}} \|B(z)\| < \eps \qquad \text{and} \qquad \sup_{\Re z \geq \gamma_{N_1}, z \notin \Pi} \|B(z)\| < \eps;
	\end{equation}
	moreover, for the already fixed $N_1$, from \eqref{B.est.3}, there exists $N_2 > N_1$ such that 
	\begin{equation}\label{B.eps.2}
		\sup_{|\Re z| \leq \gamma_{N_1}, |\Im z| \geq \gamma_{N_2}} 
		\|B(z)\| < \eps.
	\end{equation}
	
	Finally we estimate the integral in \eqref{int.In.2} for $n > N_2$. For the vertical sides of $\cR_n$, on the left we have
	\begin{equation}
		\begin{aligned}
			&\int_{-\gamma_n}^{\gamma_n} \|K(-\gamma_n + \ii \eta)\| \|B(-\gamma_n + \ii \eta)\| \|(I-B(-\gamma_n + \ii \eta))^{-1}\| \frac{1}{|-\gamma_n + \ii \eta|^\frac 12} \, \dd \eta	
			\\
			& \quad \leq
			\frac{\eps}{1-\eps} \int_{-\gamma_n}^{\gamma_n}  \frac{1}{|\gamma_n + \ii \eta|} \, \dd \eta
			= 
			\frac{2 \eps}{1-\eps} \int_{0}^{1}  \frac{1}{(1 + t^2)^\frac12} \, \dd t \leq 4 \eps,
		\end{aligned}
	\end{equation}
	and on the right
	\begin{equation}
		\begin{aligned}
			&\int_{-\gamma_n}^{\gamma_n} \|K(\gamma_n + \ii \eta)\| \|B(\gamma_n + \ii \eta)\| \|(I-B(\gamma_n + \ii \eta))^{-1}\| \frac{1}{|\gamma_n + \ii \eta|^\frac 12} \, \dd \eta	
			\\
			& \quad \leq
			\frac{\eps}{1-\eps} \int_{-\gamma_n}^{\gamma_n}  \frac{1}{|\eta|^{\frac 12}|\gamma_n + \ii \eta|^\frac12} \, \dd \eta
			= 
			\frac{2\eps}{1-\eps} \int_{0}^{1}  \frac{1}{t^\frac12 (1 + t^2)^\frac14} \, \dd t \leq 8 \eps.
		\end{aligned}
	\end{equation}
	For the horizontal sides of $\cR_n$, we obtain 
	\begin{equation}\label{int.hor}
		\begin{aligned}
			&\int_{-\gamma_n}^{\gamma_n} \|K(\xi \pm \ii \gamma_n)\| \|B(\xi \pm \ii \gamma_n)\| \|(I-B(\xi \pm \ii \gamma_n))^{-1}\| \frac{1}{|\xi \pm \ii \gamma_n|^\frac 12} \, \dd \xi	
			\\
			& \quad =
			\int_{-\gamma_{N_1}}^{\gamma_{N_1}} \|K(\xi \pm \ii \gamma_n)\| \|B(\xi \pm \ii \gamma_n)\| \|(I-B(\xi \pm \ii \gamma_n))^{-1}\| \frac{1}{|\xi \pm \ii \gamma_n|^\frac 12} \, \dd \xi	
			\\
			&  \qquad +
			\int_{\gamma_{N_1} \leq |\xi| \leq \gamma_{n}} \|K(\xi \pm \ii \gamma_n)\| \|B(\xi \pm \ii \gamma_n)\| \|(I-B(\xi \pm \ii \gamma_n))^{-1}\| \frac{1}{|\xi \pm \ii \gamma_n|^\frac 12} \, \dd \xi.
		\end{aligned}
	\end{equation}
	Next we employ \eqref{B.eps.2} and \eqref{B.eps.1}; notice that if $\xi \in [-\gamma_n,\gamma_n]$, then $\xi + \ii \gamma_n \notin \Pi$ as $r_n < \omega_n$, $n>1$, see \eqref{rk.def}.  Thus the r.h.s.~of \eqref{int.hor} does not exceed
	\begin{equation}
	\begin{aligned}
		& \frac{\eps}{1-\eps} \int_{-\gamma_{N_1}}^{\gamma_{N_1}} \frac1{\gamma_n^\frac12} \frac{1}{|\xi + \ii \gamma_n|^\frac 12} \, \dd \xi	
		+
		\frac{\eps}{1-\eps} \int_{\gamma_{N_1} \leq |\xi| \leq \gamma_{n}} \frac1{\gamma_n^\frac12} \frac{1}{|\xi + \ii \gamma_n|^\frac 12} \, \dd \xi
		\\
		& \qquad \qquad = 
		\frac{2\eps}{1-\eps} \int_{0}^{\gamma_n} \frac1{\gamma_n^\frac12} \frac{1}{|\xi + \ii \gamma_n|^\frac 12} \, \dd \xi
		=
		\frac{2\eps}{1-\eps} \int_{0}^{1} \frac{1}{(t^2 + 1)^\frac 14} \, \dd t
		\leq 4 \eps.
	\end{aligned}
	\end{equation}
	Since $\eps \in (0,\frac 12)$ was arbitrary, \eqref{In.psim0} indeed holds.
\end{proof}

\begin{proof}[Proof of Theorem~\ref{thm:RB}]
Since the system of projections $\{S_0\}\cup\{P_n\}_{n > N_0}$ is complete, the estimate \eqref{RB.cond.N*} justifies its the Riesz property, see \cite[Chap.~6]{Gohberg-1969}, \cite[\S 6]{Shkalikov-2016-71}, \cite[Thm.~A]{Motovilov-2017-8} or \cite[Thm.~2.3]{Mityagin-2025-31}.
\end{proof}


\section{Perturbations of differential operators with multiple eigenvalues}
\label{sec:diff.op}

In the application to differential operators below, we consider perturbations by potentials $V \in L^r$, $r \geq 1$. Recall that in this case, the H\"older inequality in \eqref{V.Hold} with \eqref{V.hold.rp} yields that the sequence $\{\omega_j\}_{j \in \N}$ in \eqref{asm:v.loc.sub} can be chosen as $\|P_j^0\|_{L^2 \to L^p}$

\subsection{Multi-D harmonic oscillator}

Let $A_{\rm HO}$ be the $d$-dimensional harmonic oscillator
\begin{equation}\label{A.HO.def}
A_{\rm HO} = - \Delta + |x|^2, \quad \Dom(A_{\rm HO}) = H^2(\Rd) \cap \Dom(|x|^2)
\end{equation}
in $L^2(\Rd)$. The operator $A_{\rm HO}$ is self-adjoint, has compact resolvent, its eigenvalues read 
\begin{equation}
\mu_k = 2 k -2  + d, \quad r_k =1, \quad k \in \N,
\end{equation}
and have the multiplicities  
\begin{equation}\label{rank.HO}
\Rank(P_k^0) = \binom{k-2 + d}{k-1}, \quad k \in \N;
\end{equation}
the eigenfunctions can be expressed in terms of Hermite functions, see e.g.~\cite[Chap.~18.1]{Weidmann-2003}.

\begin{theorem}\label{thm:HO}
Let $A_{\rm HO} = - \Delta + |x|^2 $ be the harmonic oscillator \eqref{A.HO.def} in $L^2(\Rd)$ with $d \geq 2$ and let $V \in L^r(\Rd)$ with 
\begin{equation}\label{r.cond.HO}
	\frac d2 < r < \infty. 
\end{equation}
Then the form of $V$ satisfies the conditions \eqref{asm:v.loc.sub} and \eqref{asm:omega} with 
\begin{equation}
\omega_j = 
\begin{cases}
\BigO\left(j^{- \frac 1{4r}} \right), &  0< \frac 1r  <\frac{2}{d+3},
\\[2mm]
\BigO \left( j^{- \frac 12 \frac{1}{d+3} } (\log j)^\frac{d+1}{2(d+3)} \right) , & \frac 1r = \frac{2}{d+3},
\\[2mm]
\BigO \left(j^{-\frac d{12}( \frac 2d  - \frac 1r)} \right),& \frac{2}{d+3} < \frac 1r < \frac 2d.
\end{cases}
\end{equation}
Hence the eigenvalues of $T= - \Delta + |x|^2+V$ are localized as in Proposition~\ref{prop:loc} or Remark~\ref{rem:EV.loc} and the system of spectral projections of $T$ has a Riesz property (see Theorem~\ref{thm:RB}).
\end{theorem}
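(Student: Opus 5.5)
The plan is to verify the hypotheses \eqref{asm:v.loc.sub} and \eqref{asm:omega} of Theorem~\ref{thm:RB} for the form $v(f,g)=\int_{\Rd} V f \ov g \,\dd x$. Proposition~\ref{prop:loc}, Remark~\ref{rem:EV.loc} and Theorem~\ref{thm:RB} then give the remaining conclusions.

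\textbf{Step 1: reduction to $L^2\to L^p$ norms.} Fix $p$ by \eqref{V.hold.rp}, i.e. $p = 2r/(r-1)$. Then $2<p<2d/(d-2)$ (any $p>2$ if $d=2$) because $d/2<r<\infty$. Since $P_j^0 f = P_j^0(P_j^0 f)$, the H\"older bound \eqref{V.Hold} gives
\begin{equation*}
|v(P_j^0 f,P_k^0 g)| \leq \|V\|_{L^r}\, \|P_j^0\|_{L^2\to L^p}\|P_j^0 f\|\, \|P_k^0\|_{L^2\to L^p}\|P_k^0 g\|.
\end{equation*}
So \eqref{asm:v.loc.sub} holds with $\omega_j = \|V\|_{L^r}^{1/2}\|P_j^0\|_{L^2\to L^p}$. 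Finiteness of these norms, and the fact that $v$ is well defined on $\Dom(a)$, follow from Step~2 together with Lemma~\ref{lem:t.def}: the decay obtained in Step~3 gives \eqref{asm:om.l1}. A density argument then extends the bound from finite sums to all of $\Dom(a)$.

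\textbf{Step 2: spectral projection bounds.} This is the main input and the step I expect to be the real obstacle. I would invoke the Hermite spectral projection estimates of Koch--Tataru \cite{Koch-2005-128}. Writing $\mu_j=\lambda^2$, these state $\|P_j^0\|_{L^2\to L^p}\lesssim \lambda^{\rho(p)}$ with the following exponents:
\begin{itemize}
\item $\rho(p) = \frac1p-\frac12$ for $2\le p< \frac{2(d+3)}{d+1}$;
\item an extra factor $(\log\lambda)^{\frac{d+1}{2(d+3)}}$ at the endpoint $p=\frac{2(d+3)}{d+1}$;
\item $\rho(p) = -\frac13+\frac d3\big(\frac12-\frac1p\big)$ for $\frac{2(d+3)}{d+1}< p\le \frac{2d}{d-2}$.
\end{itemize}
The delicate points are matching the precise regimes, the logarithmic endpoint, and possibly the constants in the case $d=2$; the proof needs only the stated upper bounds.

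\textbf{Step 3: translate the exponents.} Use $\frac12-\frac1p=\frac1{2r}$ and $\lambda\asymp j^{1/2}$. The threshold $p=\frac{2(d+3)}{d+1}$ corresponds exactly to $\frac1r=\frac2{d+3}$. In the first regime the exponent $\lambda^{\rho(p)}$ becomes $j^{-1/(4r)}$. In the third regime, $\rho(p)=\frac13\big(\frac d{2r}-1\big)$, so $\lambda^{\rho(p)}=j^{-\frac d{12}(\frac2d-\frac1r)}$. At the endpoint the exponent is $j^{-\frac12\frac1{d+3}}$ times the logarithmic factor. These are exactly the three cases in the statement.

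\textbf{Step 4: check \eqref{asm:omega}.} Apply Example~\ref{ex:mu.om.power} with $\mu_{k+1}-\mu_k=2$, hence $\gamma=1$. In each case $\omega_j=\BigO(j^{-\alpha}(\log j)^{c})$ with $\alpha>0$, because $\frac1r<\frac2d$ and $r<\infty$. The condition $2\alpha+\gamma>1$ therefore holds, and a logarithmic factor is harmless since it can be absorbed by slightly decreasing $\alpha$. The first condition in \eqref{asm:omega} is $\sum_j \omega_j^2/\mu_j \lesssim \sum_j j^{-1-2\alpha+0}<\infty$. With $r_n=1$, the diagonal term $\omega_n^2/r_n\to0$ as $n\to\infty$ because $\alpha>0$. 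Theorem~\ref{thm:RB} then applies and yields the Riesz property.
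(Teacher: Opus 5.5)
Your proposal is correct and follows essentially the same route as the paper. Both arguments use H\"older's inequality to take $\omega_j \propto \|P_j^0\|_{L^2\to L^p}$ with $\frac1r+\frac2p=1$, then the Koch--Tataru Hermite projection bounds; for the logarithmic endpoint bound the paper cites Koch--Tataru--Zworski \cite{Koch-2007-8}. They then substitute $\frac1{2r}=\frac12-\frac1p$ and apply Example~\ref{ex:mu.om.power} with $\gamma=1$, absorbing the logarithm into an $\eps$-loss in the exponent. Your extra remark that $v$ is well defined on $\Dom(a)$ fills in a point the paper leaves implicit; it follows either from $\sum_j\omega_j^2/\mu_j<\infty$ or directly from $H^1\hookrightarrow L^p$.
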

\begin{proof}
For $d \geq 2$, it is known that there exists $C \equiv C(p,d) >0$, independent of $k$, such that for all $k \geq 2$
\begin{equation}\label{herm.Lp}
\|P_k^0\|_{L^2 \to L^p} \leq C k^{\frac 12 \rho_{\rm HO}(p)},		
\end{equation}
where
\begin{equation}\label{herm.Lp.2}
\rho_{\mathrm{HO}}(p) = \begin{cases}
-[ 1 - d (\frac 12 - \frac 1p)], &  0 \leq \frac 1p \leq \frac12 - \frac1d,
\\[1mm]
- \frac13[1 - d(\frac 12 - \frac 1p)], & \frac12 - \frac1d  \leq \frac 1 p < \frac12 - \frac1{d+3}, 
\\[1mm]
- (\frac 12 - \frac 1p), & \frac12 - \frac1{d+3} < \frac 1p \leq \frac 12,
\end{cases}
\end{equation}
see \cite[Cor.~3.2]{Koch-2005-128}, and for $p=p^*$, $\frac 1{p^*} = \frac12 - \frac1{d+3}$, the result  \cite[Thm.~2]{Koch-2007-8} yields
\begin{equation}\label{herm.Lp.3}
\|P_k^0\|_{L^2 \to L^{p^*} } \leq C k^{- \frac 12 (\frac 12 - \frac 1{p^*}) } (\log k)^{\frac 1{p^*}},		
\end{equation}
see also \cite{Wang-2025-378}. In the case $p=p^*$, for every $\eps>0$, there exists $C_\eps >0$ such that for all $k \geq 2$
\begin{equation}\label{herm.Lp.3.eps}
\|P_k^0\|_{L^2 \to L^{p^*}} \leq C_\eps k^{- \frac 12 (\frac 12 - \frac 1{p^*}) +\eps}.		
\end{equation}
(Let us notice that \eqref{herm.Lp.3.eps} can be obtained from the second and third line in \eqref{herm.Lp.2} by Cauchy-Schwarz inequality with $p = p^* + \delta$ and $p = p^* - \delta$ respectively and $\delta \equiv \delta(\epsilon) > 0$ small enough.)

Let $V \in L^r(\Rd)$ and recall \eqref{V.Hold} -- \eqref{Pj.ue} and Example~\ref{ex:mu.om.power}. Since $\mu_{k+1}-\mu_k = 2$, i.e., $\gamma = 1$ in \eqref{omega.reg.1}, the sufficient condition \eqref{omega.reg.2} holds if
\begin{equation}\label{rho.HO.cond}
\rho_{\mathrm{HO}}(p)<0.
\end{equation}
For $\frac 1{2r} = \frac12 - \frac 1p$, we obtain from \eqref{herm.Lp.2} that 
\begin{equation}\label{herm.Lp.2.r}
\rho_{\mathrm{HO}}(p) = 
\begin{cases}
- \frac1{2r}, & 0 \leq \frac 1r < \frac{2}{d+3},
\\[1mm]
- \frac d6( \frac 2d  - \frac 1r), & \frac{2}{d+3} < \frac 1r \leq \frac 2d,
\\[1mm]		
-\frac d2(  \frac2d- \frac 1r), &   \frac 2d \leq \frac 1r \leq 1.
\end{cases}
\end{equation}
Thus \eqref{omega.reg.1} with \eqref{omega.reg.2} hold if $1/r <2/d$; notice that the special case with $1/r = 2/(d+3)$ is included as well, cf.~\eqref{herm.Lp.3.eps}.
\end{proof}

Since the gaps in $\sigma(A_{\rm HO})$ are equal to $2$, by \cite[Thm.~1]{Motovilov-2017-8}, the statements of
Proposition~\ref{prop:loc} and Theorem~\ref{thm:RB} hold for $- \Delta + |x|^2+V$ also if $V \in L^\infty(\Rd)$ with $\|V\|_{L^\infty} <1$.	

Finally, we remark that for $V \in L^r(\Rd)$ with compact support one can employ the local estimates on the spectral projections of $A_{\rm HO}$ in \eqref{A.HO.def} obtained recently in \cite[Thm.~2]{Wang-2025-378}. Nonetheless, this does not lead to a weaker restriction on $r$ than \eqref{r.cond.HO}.

\subsection{Landau Hamiltonian (twisted Laplacian) in even dimensions}
\label{ssec:Landau}

Let $d \in 2\N$. We write $(x,y) \in \Rd$ where $x,y \in \R^{d/2}$ and consider the vector potential
\begin{equation}
\mathbf A(x,y) = \frac 12 (- y_1, x_1, \dots, - y_{d/2},x_{d/2}).
\end{equation}
The Landau Hamiltonian, see \cite{Landau-1930-64}, in $L^2(\Rd)$ reads
\begin{equation}\label{A.Landau.def}
\begin{aligned}
A_{\rm Lan} & = (-\ii \nabla + \mathbf A)^2 = \sum_{j=1}^\frac d2 
\left(
-\ii \partial_{x_j} + \frac{y_j}{2}
\right)^2 + 
\left(
-\ii \partial_{y_j} + \frac{x_j}{2}
\right)^2,
\\
\Dom(A_{\rm Lan}) & = \{ f \in H_A^1(\Rd) \, : \, (-\ii \nabla + \mathbf A)^2f \in L^2(\Rd)\},
\end{aligned}
\end{equation}
where $H_A^1(\Rd) = \{ f \in L^2(\Rd) \, : \, |(-\ii \nabla + \mathbf A)f| \in L^2(\Rd)  \}$ is the magnetic Sobolev space; for some recent related works see \cite{Koch-2007-180,Cuenin-2017-42,Cuenin-2017-88} and for more details on magnetic Laplacian see \cite{Raymond-2017}. The operator $A_{\rm Lan}$ is self-adjoint, it has purely point spectrum, its the eigenvalues read
\begin{equation}
	\mu_k = 2 k -2 + \frac d2, \quad r_k=1, \quad k \in \N, 
\end{equation}
and $\Rank(P_k^0) = \infty$, $k \in \N$. The eigenspaces of $A_{\rm Lan}$ can be described using Hermite functions, see e.g.~\cite{Koch-2007-180} for details.

\begin{theorem}\label{thm:Landau}
Let $A_{\rm Lan} = (-\ii \nabla + \mathbf A)^2$ be the Landau Hamiltonian \eqref{A.Landau.def} in $L^2(\Rd)$ with $d \in 2 \N$ and let $V \in L^r(\Rd)$ with 
\begin{equation}\label{r.cond.Landau}
	\frac d2 < r < \infty. 
\end{equation}
Then the form of $V$ satisfies the conditions \eqref{asm:v.loc.sub} and \eqref{asm:omega} with 
\begin{equation}
	\omega_j = 
	\begin{cases}
		\BigO\left(j^{- \frac 1{4r}} \right), &  0< \frac 1r  \leq \frac{2}{d+1},
		\\[2mm]
		\BigO \left(j^{ -\frac d4(\frac 2d - \frac 1r)}\right),& \frac{2}{d+1} < \frac 1r < \frac 2d.
	\end{cases}
\end{equation}
Hence the spectrum of $T = (-\ii \nabla + \mathbf A)^2 +V$ is localized as in Proposition~\ref{prop:loc} or Remark~\ref{rem:EV.loc} and the system of spectral projections of $T$ has a Riesz property (see Theorem~\ref{thm:RB}).
\end{theorem}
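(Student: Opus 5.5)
The plan is to repeat the scheme of the proof of Theorem~\ref{thm:HO}. Fix $V \in L^r(\Rd)$ and define $p$ through \eqref{V.hold.rp}, i.e. $\frac 1{2r} = \frac12 - \frac1p$. Since $\frac d2 < r < \infty$ and $d \geq 2$, we have $r > 1$ and hence $2 < p < \infty$. By \eqref{V.Hold}--\eqref{Pj.ue}, the form of $V$ satisfies \eqref{asm:v.loc.sub} with $\omega_j = \|V\|_{L^r}^{1/2}\,\|P_j^0\|_{L^2\to L^p}$, where $P_j^0$ are now the infinite-rank spectral projections of $A_{\rm Lan}$. Theorem~\ref{thm:RB} allows arbitrary, including infinite, multiplicities, so this causes no difficulty.

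The key input is the $L^2\to L^p$ bound for the spectral projections of the twisted Laplacian due to Koch and Ricci \cite{Koch-2007-180}. I would quote it in the form: there is $C=C(p,d)$ such that for $k \geq 2$
\begin{equation*}
\|P_k^0\|_{L^2\to L^p} \leq C k^{\frac12 \rho_{\rm Lan}(p)},
\qquad
\rho_{\rm Lan}(p) =
\begin{cases}
-(\frac12 - \frac1p), & 2 \leq p \leq \frac{2(d+1)}{d-1},
\\[1mm]
d(\frac12-\frac1p) - 1, & \frac{2(d+1)}{d-1} \leq p \leq \infty.
\end{cases}
\end{equation*}
Here $d$ is the real dimension. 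The one point to check carefully is that their normalization, in which the eigenvalue index scales like $k$, agrees with $\mu_k = 2k-2+\frac d2$. A fixed rescaling of the index does not change power exponents. Unlike the Hermite case, the threshold $p = 2(d+1)/(d-1)$ carries no logarithmic loss, so the endpoint can be included in the first regime.

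Substituting $\frac12-\frac1p=\frac1{2r}$, the threshold becomes $\frac1{2r} = \frac1{d+1}$, i.e. $\frac1r = \frac2{d+1}$. This gives $\rho_{\rm Lan}(p) = -\frac1{2r}$ for $0<\frac1r\le\frac2{d+1}$, and $\rho_{\rm Lan}(p) = \frac d{2r}-1 = -\frac d2(\frac2d-\frac1r)$ for $\frac2{d+1}<\frac1r$. Halving the exponent yields exactly the asserted rates $\omega_j = \BigO(j^{-1/(4r)})$ and $\omega_j = \BigO(j^{-\frac d4(\frac2d-\frac1r)})$.

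Finally, since $\mu_{k+1}-\mu_k=2$, we have $\gamma=1$ in \eqref{omega.reg.1}, and Example~\ref{ex:mu.om.power} reduces \eqref{asm:omega} to $\alpha>0$, i.e. $\rho_{\rm Lan}(p)<0$. This holds in the first regime trivially and in the second precisely when $\frac1r<\frac2d$, which is \eqref{r.cond.Landau}. The first condition $\sum_j \omega_j^2/\mu_j<\infty$ in \eqref{asm:omega} is part of the same example. Proposition~\ref{prop:loc}, Remark~\ref{rem:EV.loc} and Theorem~\ref{thm:RB} then give the conclusions.

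The main obstacle is purely the correct importation of the Koch--Ricci estimate: matching the exponent ranges, the endpoint behaviour, and the eigenvalue normalization. Everything else is the same bookkeeping as in Theorem~\ref{thm:HO}.
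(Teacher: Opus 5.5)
Your proposal is correct and matches the paper's proof essentially step for step. The paper likewise imports the bound $\|P_k^0\|_{L^2\to L^p}\le Ck^{\frac12\rho_{\rm Lan}(p)}$ from \cite{Stempak-1998-76} and \cite{Koch-2007-180}, with your two regimes and the endpoint $\frac1p=\frac12-\frac1{d+1}$ included in both, substitutes $\frac1{2r}=\frac12-\frac1p$, and uses $\gamma=1$ in Example~\ref{ex:mu.om.power} to reduce \eqref{asm:omega} to $\rho_{\rm Lan}(p)<0$, i.e. $\frac1r<\frac2d$; your explicit factor $\|V\|_{L^r}^{1/2}$ in $\omega_j$ is just a slightly more careful normalization than the paper's $\omega_j=\|P_j^0\|_{L^2\to L^p}$.
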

\begin{proof}
It is known that there exists $C \equiv C(p,d) >0$, independent of $k$, such that for all $k \geq 1$ 
\begin{equation}\label{Landau.Lp}
\|P_k^0\|_{L^2 \to L^p} \leq C k^{\frac 12 \rho_{\mathrm{Lan}}(p)} 
\end{equation}
where
\begin{equation}\label{Landau.Lp.1}
	\rho_{\mathrm{Lan}}(p) = \begin{cases}
	-1 + d (\frac 12 - \frac 1p), &  0 \leq \frac 1p \leq \frac 12 - \frac 1{d+1},
	\\[1mm]
	- (\frac 12 - \frac 1p), & \frac 12 - \frac 1{d+1} \leq \frac 1p \leq \frac 12 ,
	\end{cases}
\end{equation}
see \cite[Thm.~1.2]{Stempak-1998-76} and \cite[Thm.~1]{Koch-2007-180}.

Let $V \in L^r(\Rd)$  and recall \eqref{V.Hold} -- \eqref{Pj.ue} and Example~\ref{ex:mu.om.power}. Since $\mu_{k+1}-\mu_k = 2$, i.e.,~$\gamma = 1$ in \eqref{omega.reg.1}, the sufficient condition \eqref{omega.reg.2} holds if
\begin{equation}\label{rho.Lan.cond}
	\rho_{\mathrm{Lan}}(p)<0.
\end{equation}
For $\frac 1{2r} = \frac 12- \frac1p$, we obtain from \eqref{Landau.Lp.1} that 
\begin{equation}\label{Landau.Lp.r}
	\rho_{\mathrm{Lan}}(p) = 
	\begin{cases}
		- \frac1{2r}, & 0 \leq \frac 1r \leq \frac 2{d+1}, 
		\\[1mm]
		-\frac d2(\frac 2d - \frac 1r), &  \frac 2{d+1} \leq \frac 1r \leq 1.	
	\end{cases}
\end{equation}
Thus \eqref{omega.reg.1} with \eqref{omega.reg.2} hold if $1/r< 2/d$.
\end{proof}

Since the gaps in $\sigma(A_{\rm Lan})$ are equal to $2$, by \cite[Thm.~1]{Motovilov-2017-8}, the statements of Proposition~\ref{prop:loc} and Theorem~\ref{thm:RB} hold for $(-\ii \nabla + \mathbf A)^2+V$ also if $V \in L^\infty(\Rd)$ with $\|V\|_{L^\infty} <1$.	
 
\subsection{Laplace-Beltrami operator on $\mathbb S^d$}
\label{ssec:LB}
Let $d >1$ and $A_{\mathbb S^d}$ be the Laplace-Beltrami operator on the sphere $\mathbb S^d$   
\begin{equation}\label{LB.def}
A_{\mathbb S^d} = - \Delta_{\mathbb S^d}, \quad \Dom(A_{\mathbb S^d} ) = H^2({\mathbb S^d});
\end{equation}
see e.g.~\cite[Chap.~18.2]{Weidmann-2003}. The operator $A_{\mathbb S^d}$ is self-adjoint, has compact resolvent, its eigenvalues read
\begin{equation}\label{LB.mu.k}
\mu_k = (k-1)(k-2+d), \quad r_k = k - 2 + \frac d2, \quad k \in \N,
\end{equation}
and have multiplicities  
\begin{equation}
\Rank(P_k^0) = \frac{(2k-3+d)(k-3+d)!}{(k-1)!(d-1)!}, \quad k \in \N, \ d>1;
\end{equation}
see e.g.~\cite[Satz.~18.11]{Weidmann-2003}. The eigenfunctions can be expressed in terms of spherical harmonics, see e.g.~\cite[Chap.~18.2]{Weidmann-2003}.

\begin{theorem}\label{thm:LB}
Let $A_{\mathbb S^d} = - \Delta_{\mathbb S^d}$ be the Laplace-Beltrami operator on $\mathbb S^d$ in \eqref{LB.def} in $L^2(\mathbb S^d)$ with $d >1$ and let $V \in L^r(\mathbb S^d)$ with 
\begin{equation}\label{r.cond.LB}
\frac d2 < r \leq \infty. 
\end{equation}
Then the form of $V$ satisfies the conditions \eqref{asm:v.loc.sub} and \eqref{asm:omega} with
\begin{equation}
	\omega_j = 
	\begin{cases}
		\BigO\left(j^{\frac12 -\frac 1{4r} - \frac d4 ( \frac 2d - \frac 1r)} \right), &  0 \leq \frac 1r \leq \frac2{d+1},
		\\[2mm]
		\BigO \left(j^{\frac12 - \frac d2(\frac 2d - \frac 1r)} \right),&  \frac2{d+1} < \frac 1r < \frac{2}{d} .
	\end{cases}
\end{equation}
Hence the eigenvalues of $ T = - \Delta_{\mathbb S^d} + V $ are localized as in Proposition~\ref{prop:loc} or Remark~\ref{rem:EV.loc} and the system of spectral projections of $T$ has a Riesz property (see Theorem~\ref{thm:RB}).

\end{theorem}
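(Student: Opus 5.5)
The plan is the same as for Theorems~\ref{thm:HO} and~\ref{thm:Landau}. We bound $\omega_j:=\|P_j^0\|_{L^2\to L^p}$ using known spectral cluster estimates on the sphere, and then verify \eqref{asm:omega} through Example~\ref{ex:mu.om.power}.

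\emph{Choice of $\omega_j$.} Since $d>1$, the assumption \eqref{r.cond.LB} gives $r>1$. H\"older's inequality \eqref{V.Hold} with $\frac1r+\frac2p=1$, i.e. $\frac12-\frac1p=\frac1{2r}$ and $p\in[2,\infty)$, then shows that \eqref{asm:v.loc.sub} holds with $\omega_j = \|V\|_{L^r}^{1/2}\,\|P_j^0\|_{L^2\to L^p}$. The case $r=\infty$ corresponds to $p=2$ and $\omega_j=\|V\|_{L^\infty}^{1/2}$.

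\emph{Spectral cluster bounds.} The projection $P_k^0$ onto spherical harmonics of degree $k-1$ is a spectral cluster of $\sqrt{-\Delta_{\mathbb S^d}}$ at frequency $\approx k$. Sogge's estimates \cite{Sogge-1988-77} give
\begin{equation*}
\|P_k^0\|_{L^2\to L^p} \leq C k^{\sigma(p)}, \qquad
\sigma(p)=\begin{cases}
\frac{d-1}{2}\left(\frac12-\frac1p\right), & \frac12-\frac1{d+1}\leq \frac1p\leq \frac12,\\[1mm]
d\left(\frac12-\frac1p\right)-\frac12, & 0\leq \frac1p\leq \frac12-\frac1{d+1}.
\end{cases}
\end{equation*}
Substituting $\frac12-\frac1p=\frac1{2r}$, the first line becomes $\frac{d-1}{4r}=\frac12-\frac1{4r}-\frac d4(\frac2d-\frac1r)$, valid for $0\le \frac1r\le\frac2{d+1}$. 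The second line becomes $\frac d{2r}-\frac12=\frac12-\frac d2(\frac2d-\frac1r)$, valid for $\frac1r\ge \frac2{d+1}$. These are exactly the stated bounds on $\omega_j$.

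\emph{Verification of \eqref{asm:omega}.} By \eqref{LB.mu.k} we have $\mu_{k+1}-\mu_k=2k-2+d\geq Ck$, so \eqref{omega.reg.1} holds with $\gamma=2$ and $\alpha=-\sigma(p)$. Condition \eqref{omega.reg.2} then reads $\sigma(p)<\frac12$, which we check in each regime.
\begin{itemize}
\item[(i)] In the second regime, $\frac12-\frac d2(\frac2d-\frac1r)<\frac12$ is equivalent to $\frac1r<\frac2d$, which is precisely \eqref{r.cond.LB}.
\item[(ii)] In the first regime, $\frac{d-1}{4r}<\frac12$ is equivalent to $\frac1r<\frac2{d-1}$. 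This holds automatically because $\frac1r\le\frac2{d+1}$.
\end{itemize}
The first condition in \eqref{asm:omega} also holds: since $\mu_j\asymp j^2$ and $\omega_j^2=\BigO(j^{2\sigma})$ with $2\sigma<1$, the series $\sum_j \omega_j^2/\mu_j$ converges. Example~\ref{ex:mu.om.power} therefore yields \eqref{asm:omega}. Proposition~\ref{prop:loc}, Remark~\ref{rem:EV.loc} and Theorem~\ref{thm:RB} then give the localization of the spectrum and the Riesz property.

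The only genuinely nontrivial input is the sharp $L^2\to L^p$ cluster estimate, which I take from the literature. The rest is exponent bookkeeping. The steps to double-check are the equivalence of the two parametrizations of the exponents at the breakpoint $\frac1r=\frac2{d+1}$, and the endpoint $r=\infty$.
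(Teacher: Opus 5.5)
Your proposal is correct and follows the paper's proof essentially step for step: Sogge's bound \eqref{Sphere.Lp}--\eqref{Sphere.Lp.2}, the substitution $\frac12-\frac1p=\frac1{2r}$, the gap estimate \eqref{mu.k.gaps.Sd} giving $\gamma=2$, and the reduction of \eqref{omega.reg.2} to $\rho_{\mathrm{Sph}}(p)<\frac12$, checked in each regime. One technicality is left implicit by both you and the paper: here $\mu_1=0$, so strictly one applies the abstract results to $A_{\mathbb S^d}+1$, a harmless shift that changes neither the projections nor the gaps.
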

\begin{proof}
It is known that there exists $C \equiv C(p,d) >0$, independent of $k$, such that for all $k \geq 1$
\begin{equation}\label{Sphere.Lp}
\|P_k^0\|_{L^2(\mathbb S^d) \to L^p(\mathbb S^d)} 
\leq C k^{\rho_{\mathrm{Sph}}(p)},		
\end{equation}
where
\begin{equation}\label{Sphere.Lp.2}
\rho_{\mathrm{Sph}}(p) =\begin{cases}
- \frac 12 + d(\frac 12 - \frac 1p), &  0 \leq \frac 1p \leq \frac12 - \frac1{d+1},
\\[1mm]
\frac{d-1}{2} (\frac 12 - \frac 1p), & \frac12 - \frac1{d+1}  \leq \frac 1p \leq \frac 12,		
\end{cases}
\end{equation}
see \cite{Sogge-1988-77} (or also e.g.~\cite{Wang-2025-378}).

Let $V \in L^r(\Rd)$  and recall \eqref{V.Hold} -- \eqref{Pj.ue} and Example~\ref{ex:mu.om.power}. The gaps $\mu_k$ satisfy
\begin{equation}\label{mu.k.gaps.Sd}
	\mu_{k+1} - \mu_k \geq 2 k -2 + d, \quad k \in \N, 
\end{equation}
so \eqref{omega.reg.1} holds with $\gamma=2$. Thus the condition \eqref{omega.reg.2} is satisfied if $- 2 \rho_{\mathrm{Sph}}(p) + 2 >1 $
or equivalently
\begin{equation}\label{Sphere.Lp.3}
\rho_{\mathrm{Sph}}(p) - \frac 12 <0.
\end{equation}
For $\frac 1{2r} = \frac 12- \frac1p$, we obtain from \eqref{Sphere.Lp.2} that 
\begin{equation}\label{Sphere.Lp.r}
\rho_{\mathrm{Sph}}(p) - \frac12 = 
\begin{cases}
-\frac 1{4r} - \frac d4 ( \frac 2d - \frac 1r), &   0 \leq \frac 1r \leq \frac2{d+1},
\\[1mm]
- \frac d2(\frac 2d - \frac 1r), &  \frac2{d+1} \leq \frac 1r \leq 1 .
\end{cases}
\end{equation}
Hence \eqref{omega.reg.1} with \eqref{omega.reg.2} hold if \eqref{r.cond.LB} is satisfied. 
\end{proof}

In the next result, we perturb $-\Delta_{\mathbb S^d}$ by a $\delta_\Sigma$-potential supported on a $(d-1)$-dimensional submanifold $\Sigma$ with a coupling $W \in L^r(\Sigma)$. Technically, this amounts to the perturbation by the form
\begin{equation}\label{v.delta.def}
	v(f,g) = \int_\Sigma W(x) f(x) \ov{g(x)} \, \dd \sigma(x).
\end{equation}

\begin{theorem}\label{thm:LB.delta}
Let $A_{\mathbb S^d}$ be the Laplace-Beltrami operator on $\mathbb S^d$ in \eqref{LB.def} in $L^2(\mathbb S^d)$ with $d >1$. Let $\Sigma$ be a $(d-1)$-dimensional submanifold of $\mathbb S^d$ and let $W \in L^r(\Sigma)$ with 
\begin{equation}\label{r.cond.LB.delta}
d-1 < r \leq \infty. 
\end{equation}
Then the form $v$ in \eqref{v.delta.def} satisfies the conditions \eqref{asm:v.loc.sub} and \eqref{asm:omega} with
	\begin{equation}
		\omega_j = 
		\begin{cases}
			\BigO\left(j^{	\frac 14  + \frac{d-2}{4} \frac 1r} \right), &  0 \leq \frac 1r < \frac 1d,
			\\[2mm]
			\BigO\left( j^{\frac{d-1}{2d}} (\log j)^\frac 12 \right), &  \frac 1r = \frac 1d,
			\\[2mm]
			\BigO \left(j^{ \frac{d-1}{2} \frac 1r } \right),&  \frac1 d < \frac 1r < \frac{1}{d-1}.
		\end{cases}
	\end{equation}
 Hence the eigenvalues of $ T = - \Delta_{\mathbb S^d} + W \delta_\Sigma $ are localized as in Proposition~\ref{prop:loc} or Remark~\ref{rem:EV.loc} and the system of spectral projections of $T$ has a Riesz property (see Theorem~\ref{thm:RB}).
\end{theorem}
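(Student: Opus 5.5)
Following the pattern of the proofs of Theorems~\ref{thm:HO}--\ref{thm:LB}, we reduce everything to Example~\ref{ex:mu.om.power} with $\gamma=2$, cf.~\eqref{mu.k.gaps.Sd}. The task is then to produce a sequence $\omega_j$ for the trace-type form \eqref{v.delta.def} and check $2\alpha+\gamma>1$, i.e.\ $\omega_j=\BigO(j^{-\alpha})$ with $\alpha>-\tfrac12$. For $f,g\in\Dom(a)$, H\"older's inequality on $\Sigma$ with $\frac1r+\frac2p=1$ gives
\begin{equation*}
|v(P_j^0 f,P_k^0 g)|\le \|W\|_{L^r(\Sigma)}\,\|P_j^0 f\|_{L^p(\Sigma)}\|P_k^0 g\|_{L^p(\Sigma)}.
\end{equation*}
Hence we may take $\omega_j=\|W\|_{L^r(\Sigma)}^{1/2}\,\|\chi_\Sigma P_j^0\|_{L^2(\mathbb S^d)\to L^p(\Sigma)}$, the restriction norm of the spectral projection (the restriction is meaningful on $\Ran P_j^0$, which consists of smooth functions).

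The key input is the restriction estimate of Burq--G\'erard--Tzvetkov \cite{Burq-2007-138} for spectral clusters on codimension-one submanifolds. In the present notation, with eigenvalue frequency $\lambda\sim k$, it reads
\begin{equation*}
\|\chi_\Sigma P_k^0\|_{L^2\to L^p(\Sigma)}\le C k^{\delta(p)},\quad
\delta(p)=\begin{cases}\frac14, & 2\le p<\frac{2d}{d-1},\\[1mm] \frac14\ (\text{with } (\log k)^{1/2}), & p=\frac{2d}{d-1},\\[1mm] \frac{d-1}{2}-\frac{d-1}{p}, & \frac{2d}{d-1}<p\le\infty.\end{cases}
\end{equation*}
Their result is stated for spectral clusters $\chi_{[\lambda,\lambda+1]}(\sqrt{-\Delta})$, and $P_k^0$ is a sub-projection of such a cluster since $\sqrt{\mu_k}\approx k$. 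Moreover, the statement for general $p$ with $2\le p$ follows from the endpoints by interpolation. I will double-check the exact exponents against \cite{Burq-2007-138}: the intermediate range is exactly where the formula $\frac14+\frac{d-2}{4}\frac1r$ must come from, and on the sphere (Zoll) these bounds are sharp.

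Next we substitute $\frac1p=\frac12-\frac1{2r}$. The threshold $p=\frac{2d}{d-1}$ corresponds to $\frac1r=\frac1d$, and $p=\infty$ to $r=1$. In the range $\frac1d<\frac1r$ this gives $\delta=\frac{d-1}{2}\cdot\frac1r$, matching the stated third case. The first case should arise analogously, possibly through an interpolation between $p=2$ (where $\delta=\tfrac14$) and the critical $p$; this bookkeeping is routine but must reproduce $\frac14+\frac{d-2}{4}\frac1r$. Checking the Example condition $\delta<\tfrac12$:
\begin{itemize}
\item In the third range it becomes $\frac{d-1}{2r}<\frac12$, i.e.\ $r>d-1$, which is exactly \eqref{r.cond.LB.delta}.
\item In the first range, $\frac14+\frac{d-2}{4r}<\frac12$ holds since $\frac1r<\frac1d$.
\item At $\frac1r=\frac1d$ the logarithm is harmless, since the exponent $\frac{d-1}{2d}<\frac12$. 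The log-term causes no trouble, as noted after \eqref{herm.Lp.3.eps}.
\end{itemize}

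Finally, by Example~\ref{ex:mu.om.power}, \eqref{asm:omega} holds, and \eqref{asm:v.loc.sub} was shown above. Proposition~\ref{prop:loc} and Theorem~\ref{thm:RB} then give the localization and the Riesz property. One more point needs care: Lemma~\ref{lem:t.def} presupposes $v$ defined on $\Dom(a)=H^1(\mathbb S^d)$. This holds because \eqref{asm:v.loc.sub} together with \eqref{asm:om.l1} already yields form-boundedness; the only subtlety is defining $v$ on $H^1$ via the trace theorem for $W\in L^r(\Sigma)$, $r>d-1$, which follows by a density argument from the bound itself.

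The main obstacle is the second step: correctly importing the restriction bounds from \cite{Burq-2007-138} for the exact projections $P_k^0$ and the intermediate $p$. In particular, the first-case exponent and the logarithmic endpoint must be justified precisely.
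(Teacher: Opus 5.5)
Your route is the paper's: H\"older on $\Sigma$ with $\frac1r+\frac2p=1$, and $\omega_j$ taken as the restriction norms $\|P_j^0\|_{L^2(\mathbb S^d)\to L^p(\Sigma)}$ from \cite[Thm.~3]{Burq-2007-138]}. Then the substitution $\frac1p=\frac12-\frac1{2r}$ and Example~\ref{ex:mu.om.power} with $\gamma=2$, i.e.\ exponent $<\frac12$. Several parts are fine: the third case, the passage from spectral clusters to $P_k^0$, the $\|W\|_{L^r(\Sigma)}^{1/2}$ normalization, and the handling of the logarithm.

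\textbf{The error.} The table you quote for the key input is wrong when $d\ge 3$ and $2\le p\le\frac{2d}{d-1}$. The exponent there is not the constant $\frac14$ but
\[
\delta(p)=\frac{d-1}{4}-\frac{d-2}{2p},\qquad 2\le p<\frac{2d}{d-1}.
\]
This equals $\frac14$ only at $p=2$ or when $d=2$. At $p=\frac{2d}{d-1}$ the bound is $k^{\frac{d-1}{2d}}(\log k)^{1/2}$, not $k^{1/4}(\log k)^{1/2}$.

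Your version is false, not merely unverified. Take the $L^2$-normalized highest-weight harmonic $(x_1+\ii x_2)^k$. It has size $\sim k^{(d-1)/4}$ on a $k^{-1/2}$-tube around a great circle. Restricted to the equator $\{x_{d+1}=0\}$, which contains that circle, its $L^p(\Sigma)$-norm is $\sim k^{\frac{d-1}{4}-\frac{d-2}{2p}}$, which exceeds $k^{1/4}$ for $p>2$. Your table also contradicts the theorem's own second case and your own bullet, where you correctly use $\frac{d-1}{2d}$. As written, the first and second cases are therefore not derived; you only assert that the formula ``must'' come out.

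\textbf{The fix.} It is immediate once you quote the estimate correctly. Substituting $\frac1p=\frac12-\frac1{2r}$ gives, for $\frac1r<\frac1d$,
\[
\frac{d-1}{4}-\frac{d-2}{2}\Bigl(\frac12-\frac1{2r}\Bigr)=\frac14+\frac{d-2}{4r},
\]
exactly as in the paper's \eqref{Sphere.Sigma.r}, with no interpolation needed.

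If you prefer to reach the intermediate range by interpolation, interpolate between $p=2$ and the critical $p=\frac{2d}{d-1}$ using the correct endpoint value $\frac{d-1}{2d}$. This gives the right exponent up to a power of $\log k$, which is harmless after absorbing it into $k^{\eps}$.

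The critical-$p$ estimate is a genuine input from \cite{Burq-2007-138} and cannot be obtained from the endpoints $p=2$ and $p=\infty$. Interpolating those two only gives the exponent $\frac14+\frac{2d-3}{4r}$, i.e.\ the weaker range $r>2d-3$.
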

\begin{proof}
It is known that there exists $C \equiv C(p,d) >0$, independent of $k$, such that for all $k \geq 2$
\begin{equation}\label{Sphere.Lp.new}
\|P_k^0\|_{L^2(\mathbb S^d) \to L^p(\Sigma)} 
\leq C k^{\rho_{\Sigma}(p)},		
\end{equation}
where
\begin{equation}\label{Sphere.Sigma}
\rho_{\Sigma}(p)= 
\begin{cases}
\frac{d-1}{2} (1 - \frac 2p), &  0 \leq \frac 1p < \frac12 - \frac1{2d},
\\[1mm]
\frac{d-1}{4} - \frac{d-2}{2p} , & \frac12 - \frac1{2d} < \frac 1p \leq \frac 12,		
\end{cases}
\end{equation}
and for $p = p^*$, $\frac 1{p^*} = \frac 12 - \frac1{2d}$,
\begin{equation}\label{Sphere.Lp.new.2}
\|P_k^0\|_{L^2(\mathbb S^d) \to L^{p^*}(\Sigma)} 
\leq C k^{\frac{d-1}{2} (1 - \frac 2{p^*})} (\log k)^\frac 12,		
\end{equation}
see \cite[Thm.~3]{Burq-2007-138}.

For $v$ in \eqref{v.delta.def}, the H\"older inequality yields 
\begin{equation}
|v(f,g)| \leq \|W\|_{L^r(\Sigma)} \|f\|_{L^p(\Sigma)} \|g\|_{L^p(\Sigma)}
\end{equation}
where
\begin{equation}\label{rp.Sigma}
	\frac{1}{r} + \frac{2}{p} = 1.
\end{equation}
Hence the sequence $\{\omega_j\}_{j \in \N}$ in \eqref{asm:v.loc.sub} can be chosen as 
$ \|P_j^0\|_{L^2 \to L^p(\Sigma)}$. 

In view of Example~\ref{ex:mu.om.power} and \eqref{mu.k.gaps.Sd}, i.e., $\gamma = 2$ in \eqref{omega.reg.1}, the condition \eqref{omega.reg.2} is satisfied if $- 2 \rho_\Sigma(p) + 2 > 1$ or equivalently
\begin{equation}\label{ag.cond.Sd.Sigma}
\rho_\Sigma(p) < \frac 12.
\end{equation}
For $\frac 1{r} = 1- \frac2p$, we rewrite \eqref{Sphere.Sigma} as
\begin{equation}\label{Sphere.Sigma.r}
\rho_{\Sigma}(p)= 
\begin{cases}
\frac 14  + \frac{d-2}{4} \frac 1r , & 0 \leq \frac 1r < \frac 1d,		
\\[1mm]
\frac{d-1}{2} \frac 1r, &  \frac 1d < \frac 1r \leq 1.
\end{cases}
\end{equation}
Hence \eqref{omega.reg.1} and \eqref{omega.reg.2} hold if $r \in (d-1, \infty]$. The special case with $r = 1/d$ is included as well since \eqref{Sphere.Lp.new.2} or the first line of \eqref{Sphere.Sigma} and $|\Sigma|<\infty$ yield that for every $\eps>0$, there exists $C_\eps >0$ such that for all $k \geq 2$
\begin{equation}
\|P_k^0\|_{L^2(\mathbb S^d) \to L^{p^*}(\Sigma)} 
\leq C_\eps k^{\frac{d-1}{2} (1 - \frac 2{p^*})+\eps}.
\end{equation}
\end{proof}


\printbibliography


\end{document}